\newtheorem{thm}{Theorem}
\newtheorem{lem}[thm]{Lemma}
\xpatchcmd{\proof}{\itshape}{\normalfont\proofnameformat}{}{}
\newcommand{\proofnameformat}{}
\begin{document}

\renewcommand{\proofnameformat}{\bfseries}

\begin{center}
{\Large\textbf{Eigenvalues of random matrices from compact classical groups in Wasserstein metric}}

\vspace{10mm}

\textbf{Bence Borda}

{\footnotesize Graz University of Technology

Steyrergasse 30, 8010 Graz, Austria

Email: \texttt{borda@math.tugraz.at}}

\vspace{5mm}

{\footnotesize \textbf{Keywords:} circular unitary ensemble, spectral measure, characteristic polynomial, \\ determinantal point process, optimal transport}

{\footnotesize \textbf{Mathematics Subject Classification (2020):} 60B20, 60F05, 60G55, 49Q22}
\end{center}

\vspace{4mm}

\begin{abstract}
The circular unitary ensemble and its generalizations concern a random matrix from a compact classical group $\mathrm{U}(N)$, $\mathrm{SU}(N)$, $\mathrm{O}(N)$, $\mathrm{SO}(N)$ or $\mathrm{USp}(N)$ distributed according to the Haar measure. The eigenvalues are known to be very evenly distributed on the unit circle. In this paper, we study the distance from the empirical measure of the eigenvalues to uniformity in the quadratic Wasserstein metric $W_2$. After finding the exact value of the expected value and the variance, we deduce a limit law for the square of the Wasserstein distance. We reformulate our results in terms of the $L^2$ average of the number of eigenvalues in circular arcs, and also in terms of the characteristic polynomial of the matrix on the unit circle.
\end{abstract}

\section{Introduction}

A central problem in random matrix theory concerns the empirical spectral measure of random matrix models. Working with large matrices with i.i.d.\ entries, under very general assumptions the suitably scaled empirical measure of the eigenvalues converges either to Wigner's semicircle law (in the case of Hermitian matrices) or to the uniform measure on the complex unit disk (in the case of non-Hermitian matrices) as the dimension of the matrices increases. Estimating the rate of convergence in Wasserstein metric has attracted attention in recent years \cite{BGM,CHM,DA,JA,MM3,OW}.

In this paper, we work with Dyson's circular unitary ensemble \cite{DY} and some of its generalizations. That is, we consider a matrix drawn uniformly at random from a compact classical group: $\mathrm{U}(N)$, $\mathrm{SU}(N)$, $\mathrm{O}(N)$, $\mathrm{SO}(N)$ or $\mathrm{USp}(N)$. The $N$ eigenvalues then lie on the unit circle, and tend to be very evenly distributed. We refer to Diaconis \cite{DI1}, Meckes \cite{M} and Mehta \cite{ME} for a survey and applications.

The eigenvalues and the characteristic polynomial of a random unitary matrix have been extensively used to model the zeros and the value distribution of the Riemann zeta function since the seminal paper of Montgomery \cite{MO}. The low-lying zeros of certain families of $L$-functions are similarly modeled, depending on the symmetry type of the family, by the eigenvalues of a random orthogonal or symplectic matrix, further motivating the study of random matrices from compact classical groups. We refer to Katz and Sarnak \cite{KS}, and Keating and Snaith \cite{KSn2} for a survey.

To state our results, we introduce the following notation. Let $\mathbb{T} = \{ z \in \mathbb{C} \, : \, |z|=1 \}$ denote the unit circle, and let $\lambda_{\mathbb{T}}$ be the normalized Haar measure on $\mathbb{T}$. The metric on $\mathbb{T}$ is the arc length metric $d(z,w)= \min \{ \mathrm{arg} (z \bar{w}), 2 \pi - \mathrm{arg} (z \bar{w}) \}$, where $\bar{w}$ is the complex conjugate of $w$, and $\mathrm{arg}$ is the argument in $[0,2 \pi)$. Given $1 \le p < \infty$, the Wasserstein metric $W_p$ is a metric on the set of Borel probability measures on $\mathbb{T}$ defined as
\[ W_p (\mu, \nu) = \inf_{\vartheta \in \mathrm{Coupling} (\mu, \nu)} \left( \int_{\mathbb{T}^2} d(z,w)^p \, \mathrm{d}\vartheta (z,w) \right)^{1/p} , \]
where $\mathrm{Coupling} (\mu, \nu)$ is the set of Borel probability measures on $\mathbb{T}^2$ with marginals $\mu$ and $\nu$. The Wasserstein metric was originally introduced in the theory of optimal transport, and has applications in many different fields including geometric analysis, fluid dynamics and computer science. The case $p=2$ is particularly well studied because of its connections to entropy and logarithmic Sobolev inequalities \cite{OV}. We refer to Villani \cite{VI} for an introduction and further context.

We use the notation $\mathrm{O}^- (N) = \{ A \in \mathrm{O}(N) \, : \, \det A = -1 \}$ for the coset of $\mathrm{SO}(N)$ in $\mathrm{O}(N)$. By the normalized Haar measure on $\mathrm{O}^- (N)$ we mean two times the restriction of the normalized Haar measure on $\mathrm{O}(N)$.

Let $G=\mathrm{U}(N)$, $\mathrm{SU}(N)$, $\mathrm{SO}(2N+1)$, $\mathrm{O}(2N+1)$, $\mathrm{SO}(2N)$, $\mathrm{O}^- (2N+2)$ or $\mathrm{USp}(2N)$ with some integer $N \ge 1$, and let $A \in G$ be a random matrix distributed according to the normalized Haar measure on $G$. Note that $1$ and/or $-1$ must be an eigenvalue of $A$ for certain $G$ for parity reasons. See Table \ref{tablecGsigmaG} for the set of these trivial eigenvalues. We call all other eigenvalues of $A$ nontrivial, and denote their number by $N_0$. Let
\[ \mu_A = \frac{1}{N_0} \sum_{\lambda \textrm{ nontrivial eigenvalue of } A} \delta_{\lambda} \]
denote the empirical measure of the nontrivial eigenvalues of $A$.

We are interested in $W_p (\mu_A, \lambda_{\mathbb{T}})$, the distance to uniformity in Wasserstein metric. Meckes and Meckes \cite{MM1,MM2} showed that for any $1 \le p < \infty$,
\[ \mathbb{E}\, W_p (\mu_A, \lambda_{\mathbb{T}}) \ll p \frac{\sqrt{\log N}}{N} \]
with a universal implied constant, and proved a large deviation inequality for $W_p (\mu_A, \lambda_{\mathbb{T}})$. In this paper, we only work with the quadratic Wasserstein metric $W_2$. Our first result establishes the precise asymptotics of the expected value and the variance of $W_2^2 (\mu_A, \lambda_{\mathbb{T}})$.
\begin{thm}\label{maintheorem} Let $G=\mathrm{U}(N)$, $\mathrm{SU}(N)$, $\mathrm{SO}(2N+1)$, $\mathrm{O}(2N+1)$, $\mathrm{SO}(2N)$, $\mathrm{O}^- (2N+2)$ or $\mathrm{USp}(2N)$, and let $A \in G$ be a random matrix distributed according to the normalized Haar measure on $G$. We have
\[ \begin{split} \mathbb{E} \, W_2^2 (\mu_A, \lambda_{\mathbb{T}}) &= 2 \frac{\log N_0}{N_0^2} + \frac{c_G}{N_0^2} + O \left( \frac{1}{N^3} \right) , \\ \mathrm{Var} \, W_2^2 (\mu_A, \lambda_{\mathbb{T}}) &= \frac{\sigma_G}{N_0^4} + O \left( \frac{1}{N^5} \right) \end{split} \]
with universal implied constants. The number of nontrivial eigenvalues $N_0$ and the constants $c_G$ and $\sigma_G$ are given in Table \ref{tablecGsigmaG}.
\end{thm}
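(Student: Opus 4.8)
\emph{Proof sketch.} The plan is to replace $W_2$ by an exact spectral formula and then reduce everything to known moments of power sums of the eigenvalues.

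\textbf{Step 1 (reduction).} On the circle one has the identity
\[ W_2^2(\mu,\lambda_{\mathbb{T}}) = 2\sum_{k=1}^{\infty}\frac{|\widehat{\mu}(k)|^2}{k^2},\qquad \widehat{\mu}(k)=\int_{\mathbb{T}}z^{-k}\,\mathrm{d}\mu(z), \]
valid for every Borel probability measure $\mu$: indeed $W_2^2(\mu,\lambda_{\mathbb{T}})$ equals the variance of $g_\mu(\theta)=2\pi\mu([0,\theta))-\theta$ (the circular quantile representation of $W_2$, independent of the choice of base point $0$), and $g_\mu$ has Fourier coefficients $\widehat{g_\mu}(k)=\widehat{\mu}(k)/(ik)$ for $k\neq0$, so Parseval gives the claim. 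Since $\widehat{\mu_A}(k)=N_0^{-1}\overline{Z_k}$ with $Z_k:=\sum_{\lambda\text{ nontrivial}}\lambda^{k}$, this yields
\[ W_2^2(\mu_A,\lambda_{\mathbb{T}})=\frac{2}{N_0^2}\sum_{k=1}^{\infty}\frac{|Z_k|^2}{k^2}=\frac{2}{N_0^2}\sum_{i,j}B(\theta_i-\theta_j), \]
where $B(\psi)=\sum_{k\geq1}k^{-2}\cos(k\psi)=\tfrac{\pi^2}{6}-\tfrac{\pi\psi}{2}+\tfrac{\psi^2}{4}$ on $[0,2\pi]$. Since $\sum_{k\geq1}k^{-1}Z_kz^k=-\log\det(I-z\Lambda)$ for the diagonal matrix $\Lambda$ of nontrivial eigenvalues, this also equals $\tfrac{1}{\pi N_0^2}\int_0^{2\pi}|\log\det(I-e^{i\theta}\Lambda)|^2\,\mathrm{d}\theta$; the two middle expressions are the reformulations via arc counts and via the characteristic polynomial referred to in the abstract. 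It remains to compute moments of the $Z_k$, and $Z_k=\mathrm{Tr}(A^k)-\varepsilon_k$ with $\varepsilon_k\in\{0,\pm1,1+(-1)^k\}$ the deterministic trace of the trivial eigenvalues.

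\textbf{Step 2 (mean).} From the two-point form, $\mathbb{E}\,W_2^2(\mu_A,\lambda_{\mathbb{T}})=\tfrac{2}{N_0^2}\bigl(N_0B(0)+\iint_{\mathbb{T}^2}B\,\mathrm{d}\rho_2\bigr)$ with $\rho_2$ the $2$-point correlation measure of the nontrivial eigenvalues, which is explicit for all seven groups (the sine-type kernel for $\mathrm{U}(N),\mathrm{SU}(N)$, the orthogonal/symplectic kernels from the Weyl integration formula otherwise). As $\int_{\mathbb{T}}B=0$ the product part of $\rho_2$ drops out, and computing the Fourier coefficients of the remaining ("kernel-squared'') part turns the integral into a sum $-\sum_{k\geq1}k^{-2}(\alpha N-k)_+$ for a group-dependent $\alpha$; since $B(0)=\pi^2/6=\zeta(2)$ the $O(N)$ contributions cancel, and the estimates $N\sum_{k<N}k^{-2}=\zeta(2)N-1+O(1/N)$, $\sum_{k<N}k^{-1}=\log N+\gamma+O(1/N)$ leave exactly $2\log N_0/N_0^2+c_G/N_0^2+O(1/N^3)$. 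For $\mathrm{U}(N)$ this gives $c_G=2(\gamma+1)$, and the other groups shift $c_G$ by explicit constants through $\varepsilon_k$ and the precise kernel.

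\textbf{Step 3 (variance).} Here $\mathrm{Var}\,W_2^2(\mu_A,\lambda_{\mathbb{T}})=\tfrac{4}{N_0^4}\mathrm{Var}\bigl(\sum_{i\neq j}B(\theta_i-\theta_j)\bigr)$, which expands via the correlation functions $\rho_2,\rho_3,\rho_4$ (available from the determinantal, resp.\ Pfaffian, structure) into a lengthy combination of kernel integrals; equivalently $\mathrm{Var}\,W_2^2=\tfrac{4}{N_0^4}\sum_{k,l\geq1}k^{-2}l^{-2}\,\mathrm{Cov}(|Z_k|^2,|Z_l|^2)$, the mixed fourth moments being accessible through symmetric-function identities — for $\mathrm{U}(N)$, $\mathrm{Cov}(|Z_k|^2,|Z_l|^2)=-\sum_{\nu\vdash k+l,\,\ell(\nu)>N}(\chi^{\nu}_{(k,l)})^2$, which vanishes for $k+l\leq N$. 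These covariances vanish below the threshold, and after the $N_0^{-4}$ normalization the double sum converges as $N\to\infty$: the diagonal $k=l$ supplies the bulk of $\sigma_G$ (for $\mathrm{U}(N)$ it gives $4\zeta(2)$, from $\sum_{k\leq N}k^{-2}\to\zeta(2)$), the off-diagonal covariances with $k,l$ comparable to $N$ supply a further constant, obtained as a scaling limit of the relevant kernel integral, and everything else is $O(1/N^5)$. Carrying this out group by group gives the tabulated $\sigma_G$.

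The hard part will be Step 3. One must produce asymptotics for the joint fourth-order statistics (the $\rho_4$-integrals, i.e.\ the $\mathrm{Cov}(|Z_k|^2,|Z_l|^2)$) that are uniform in $k$ and $l$; and in the ensuing summation the "edge'' off-diagonal covariances with $k,l\asymp N$, though individually of smaller order than their naive bound, are too numerous to discard — they feed into $\sigma_G$ at the same order $N_0^{-4}$ as the diagonal term — so only an exact evaluation of their limiting contribution, not a crude bound, will suffice, and it must be carried out separately for all seven groups, which differ both in their correlation kernels and in their trivial eigenvalues.
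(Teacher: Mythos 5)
Your overall strategy is the same as the paper's: the exact identity $W_2^2(\mu_A,\lambda_{\mathbb{T}})=\frac{2}{N_0^2}\sum_{k\ge1}|Z_k|^2/k^2$, followed by computing first and second moments of the power sums $Z_k$ from the correlation functions of the eigenvalue point process and then summing asymptotically. Steps 1 and 2 are sound in outline (the paper obtains $\mathbb{E}|Z_k|^2=\min\{k,N_0\}+\eta(k)$ for each group and the mean asymptotics exactly as you indicate). But as a proof of the theorem the proposal has a genuine gap: the entire content of the variance statement is the evaluation of $\mathrm{Cov}(|Z_k|^2,|Z_l|^2)$ uniformly in $k,l$ for all five kernel types, and this is precisely what you defer ("the hard part will be Step 3"). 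Beyond $\mathrm{U}(N)$, where the character identity you quote is a legitimate substitute for the kernel computation, you only assert that the mixed fourth moments are "accessible" from the determinantal/Pfaffian structure; the paper's proof of this step (Theorem \ref{exacttheorem} and Lemma \ref{gammalemmaSO2N+1}) is a long case-by-case computation of three- and four-fold kernel integrals, and nothing in the proposal replaces it.

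Moreover, the one concrete structural claim you do make about the variance is wrong. You assert that the off-diagonal covariances with $k,l\asymp N$ "feed into $\sigma_G$ at the same order $N_0^{-4}$ as the diagonal term" and therefore require an exact scaling-limit evaluation. In fact, for every group the off-diagonal part contributes only $O(N^{-5})$: in the paper's notation the off-diagonal covariance is $V(k,\ell)+\delta(k,\ell)$, which vanishes for $k+\ell\le N$ (indeed $V$ vanishes for $k+\ell\le 2N$ in the non-unitary cases), and on the remaining strip a crude bound already gives $\sum_{k\ne\ell}|V(k,\ell)+\delta(k,\ell)|/(k^2\ell^2)=O(1/N)$. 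Thus $\sigma_G$ comes entirely from the diagonal sum $4\sum_k T(k)/k^4$. Your own numbers show the inconsistency for $\mathrm{U}(N)$: the diagonal alone gives $4\zeta(2)=2\pi^2/3$, which is already the tabulated $\sigma_{\mathrm{U}(N)}$, leaving no room for the "further constant" you expect from the edge terms. So if you carried out your plan you would find that limiting contribution to be zero; the misprediction is not fatal to the strategy, but together with the missing fourth-moment computations it means the proposal does not yet establish either the values of $\sigma_G$ or the $O(N^{-5})$ error term.
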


\begin{table}[b]
\centering
\begin{tabular}{|Sc|Sc|Sc|Sc|Sc|}
\hline
$G$ & Trivial eigenvalues & $N_0$ & $c_G$ & $\sigma_G$ \\
\hline \hline
$\mathrm{U}(N)$, $\mathrm{SU}(N)$ & $\emptyset$ & $N$ & $2 \gamma +2$ & $\displaystyle{\frac{2\pi^2}{3}}$ \\
\hline
$\mathrm{SO}(2N+1)$, $\mathrm{O}(2N+1)$ & $\{ \det A \}$ & $2N$ & $\displaystyle{2 \gamma +2 + \frac{\pi^2}{4}}$ & $\displaystyle{\frac{4\pi^2}{3} + 14 \zeta (3)}$ \\
\hline
$\mathrm{SO}(2N)$, $\mathrm{USp}(2N)$ & $\emptyset$ & $2N$ & $\displaystyle{2 \gamma +2+\frac{\pi^2}{12}}$ & $\displaystyle{\frac{4 \pi^2}{3} + 2 \zeta (3)}$ \\
\hline
$\mathrm{O}^- (2N+2)$ & $\{ 1, -1 \}$ & $2N$ & $\displaystyle{2 \gamma +2 + \frac{\pi^2}{12}}$ & $\displaystyle{\frac{4 \pi^2}{3} + 2 \zeta (3)}$ \\
\hline
\end{tabular}
\caption{The set of trivial eigenvalues, the number of nontrivial eigenvalues $N_0$, and the constants $c_G$ and $\sigma_G$. Note that $\gamma$ is the Euler--Mascheroni constant, and $\zeta$ is the Riemann zeta function.}
\label{tablecGsigmaG}
\end{table}

We actually find exact formulas for $\mathbb{E} \, W_2^2 (\mu_A, \lambda_{\mathbb{T}})$ and $\mathrm{Var} \, W_2^2 (\mu_A, \lambda_{\mathbb{T}})$ without an error term, see Theorem \ref{exacttheorem}. Our second result is a limit law for $W_2^2 (\mu_A, \lambda_{\mathbb{T}})$ after natural centering and scaling.

\begin{thm}\label{limitlawtheorem} Let $G=\mathrm{U}(N)$, $\mathrm{SU}(N)$, $\mathrm{SO}(2N+1)$, $\mathrm{O}(2N+1)$, $\mathrm{SO}(2N)$, $\mathrm{O}^- (2N+2)$ or $\mathrm{USp}(2N)$, and let $A \in G$ be a random matrix distributed according to the normalized Haar measure on $G$. We have
\[ N_0^2 W_2^2 (\mu_A, \lambda_{\mathbb{T}}) - 2 \log N_0 - c_G \overset{d}{\to} \xi_G \qquad \textrm{as } N \to \infty . \]
The random variable $\xi_G$ is defined as an infinite series of independent random variables in Table \ref{tablexiG}, and its characteristic function is given in Table \ref{tablexiGchar}.
\end{thm}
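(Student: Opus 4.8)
The plan is to express $N_0^2 W_2^2(\mu_A,\lambda_{\mathbb T})$ as a weighted sum of squares of linear eigenvalue statistics, then apply the known central limit theorems for such statistics in the compact classical groups. On the circle the quadratic Wasserstein distance to uniformity has a clean spectral formula: writing the eigenangles and using the inverse-CDF characterization of optimal transport in one dimension, $W_2^2(\mu_A,\lambda_{\mathbb T})$ decomposes via Parseval over the Fourier modes of the "discrepancy" of the eigenvalues. Concretely, I would first establish (this should already be available, e.g.\ as the exact formula behind Theorem \ref{exacttheorem}) that
\[
N_0^2 W_2^2(\mu_A,\lambda_{\mathbb T}) = \sum_{k=1}^{\infty} \frac{2}{k^2}\,\bigl|\mathrm{Tr}(A^k) - \mathbb{E}\,\mathrm{Tr}(A^k)\bigr|^2 + (\text{lower order}),
\]
up to the contribution of the trivial eigenvalues and a deterministic shift; the centering constant $2\log N_0 + c_G$ is exactly the expectation of the leading part, matching Theorem \ref{maintheorem}.

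Next I would invoke the limiting joint distribution of the traces of powers. By the classical results of Diaconis--Shahshahani and Diaconis--Evans (and their analogues for $\mathrm{O}$, $\mathrm{SO}$, $\mathrm{USp}$), for each fixed $K$ the vector $(\mathrm{Tr}(A) - \mathbb{E}\,\mathrm{Tr}(A), \ldots, \mathrm{Tr}(A^K) - \mathbb{E}\,\mathrm{Tr}(A^K))$ converges in distribution, as $N\to\infty$, to a vector of independent (complex or real) Gaussians with explicit variances depending on $G$ (for $\mathrm U(N)$, $\mathrm{Tr}(A^k)$ is asymptotically complex Gaussian with variance $k$; for the orthogonal and symplectic families one gets real Gaussians with a $\pm 1$ additive parity term in the mean). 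Substituting these limits into the truncated sum $\sum_{k=1}^{K} \frac{2}{k^2}|\mathrm{Tr}(A^k)-\mathbb E\,\mathrm{Tr}(A^k)|^2$ produces, in the limit, a finite sum of independent scaled chi-square (or squared-Gaussian) variables; letting $K\to\infty$ gives the infinite series $\xi_G$ of Table \ref{tablexiG}, and the series converges a.s.\ since the $k$-th term has mean $O(1/k)$ and the weights $2/k^2$ are summable against the variances.

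The main obstacle is the uniform control of the tail $\sum_{k>K}$: one must show that for every $\varepsilon>0$ there is $K$ with
\[
\limsup_{N\to\infty}\ \mathbb{P}\!\left( \sum_{k>K} \frac{2}{k^2}\,\bigl|\mathrm{Tr}(A^k)-\mathbb{E}\,\mathrm{Tr}(A^k)\bigr|^2 > \varepsilon \right) < \varepsilon,
\]
so that the truncation error is negligible uniformly in $N$ and the finite-dimensional convergence upgrades to convergence of the full series. This follows from a second-moment (Markov) bound once one has $\mathbb{E}\,|\mathrm{Tr}(A^k) - \mathbb{E}\,\mathrm{Tr}(A^k)|^2 = O(\min\{k,N\})$ uniformly over all the groups $G$ in question — precisely the kind of exact trace covariance computation that underlies Theorem \ref{exacttheorem} — together with care that the "lower order" remainder terms in the Wasserstein formula (coming from the trivial eigenvalues $\pm 1$ and from the discretization) are $o_{\mathbb P}(1)$ after the $N_0^2$ scaling. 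Finally, the characteristic function in Table \ref{tablexiGchar} is obtained by taking the product over $k$ of the characteristic functions of the individual squared-Gaussian terms, i.e.\ an infinite product of factors of the form $(1 - 4it/k^2)^{-\beta_k/2}$ with the appropriate exponents $\beta_k$ and parity shifts; convergence of this product is guaranteed by $\sum_k 1/k^2 < \infty$.
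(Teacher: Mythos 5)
Your overall route (the exact Fourier formula for $W_2^2$, truncation at a fixed $K$, the Diaconis--Shahshahani limit for traces of powers, and a uniform-in-$N$ tail bound) is the same skeleton as the paper's proof, but two of your concrete steps do not work as stated. First, the centering inside the square is wrong for the non-unitary groups. The exact identity is $N_0^2 W_2^2(\mu_A,\lambda_{\mathbb{T}}) = 2\sum_{k\ge 1} k^{-2}\,(\mathrm{Tr}(A^k)-t_k)^2$, where $t_k$ is the \emph{deterministic} contribution of the trivial eigenvalues to the trace ($t_k=1$ for $\mathrm{SO}(2N+1)$, $t_k=1+(-1)^k$ for $\mathrm{O}^-(2N+2)$, $t_k=0$ otherwise), not $\mathbb{E}\,\mathrm{Tr}(A^k)$. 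Since, e.g., for $\mathrm{SO}(2N+1)$ one has $\mathbb{E}\,\mathrm{Tr}(A^k)=\mathds{1}_{\{k \textrm{ even}\}}$ for $k\le 2N-1$, replacing $t_k$ by $\mathbb{E}\,\mathrm{Tr}(A^k)$ discards cross terms of the form $\sum_k 4k^{-2}\mathds{1}_{\{k \textrm{ odd}\}}(\mathrm{Tr}(A^k)-1)$, which are \emph{not} lower order: they converge in distribution to a nondegenerate $O(1)$ random variable and are exactly what produces the linear terms $-\mathds{1}\,(2/\sqrt{k})X_k$ in Table \ref{tablexiG}. With your expectation-centering, substituting the trace limits would give $2\sum_k (X_k^2-1)/k$ for all of $\mathrm{SO}$, $\mathrm{O}$, $\mathrm{O}^-$, $\mathrm{USp}$, whose variance is $4\pi^2/3$ rather than $\sigma_G$, so you would not recover $\xi_G$. (For $\mathrm{U}(N)$, $\mathrm{SU}(N)$ there is no issue since $\mathbb{E}\,\mathrm{Tr}(A^k)=0$.)

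Second, your displayed tail condition is false as written: $\mathbb{E}\sum_{k>K} 2k^{-2}|\mathrm{Tr}(A^k)-\mathbb{E}\,\mathrm{Tr}(A^k)|^2 \asymp 2\log(N_0/K)$, which diverges as $N\to\infty$ for fixed $K$, so the probability that this tail exceeds $\varepsilon$ tends to $1$, not to something smaller than $\varepsilon$. The tail must itself be centered by its expectation (that expectation is precisely the divergent part of $2\log N_0+c_G$), and what one then needs is a bound, uniform in $N$, on $\mathrm{Var}\bigl(\sum_{k>K} k^{-2}(\mathrm{Tr}(A^k)-t_k)^2\bigr)\ll 1/K$. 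This requires control of fourth moments and covariances of the traces across different $k$ --- in the paper this is exactly the $T(k)$, $V(k,\ell)$, $\delta(k,\ell)$ computations of Theorem \ref{exacttheorem} applied with the summation restricted to $k,\ell>K$, followed by Chebyshev --- and it does not follow from the second-moment bound $\mathbb{E}\,|\mathrm{Tr}(A^k)-\mathbb{E}\,\mathrm{Tr}(A^k)|^2=O(\min\{k,N\})$ alone. A minor further point: the characteristic-function factors are of the form $(1-4it/k)^{-1/2}$ (or $(1-2it/k)^{-1}$ for the unitary case) multiplied by exponential centering factors and, for the noncentral terms, a factor of the type $e^{ic^2t/(1-4it/k)}$; they are not $(1-4it/k^2)^{-\beta_k/2}$, and without the exponential centering the infinite product does not converge (it is the Weierstrass product of the Gamma function).
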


\begin{table}[h]
\centering
\begin{tabular}{|Sc|Sc|}
\hline
$G$ & Series representation of $\xi_G$ \\
\hline \hline
$\mathrm{U}(N)$, $\mathrm{SU}(N)$ & $\displaystyle{\sum_{k=1}^{\infty} \frac{X_k^2 + Y_k^2 -2}{k}}$ \\
\hline
$\mathrm{SO}(2N+1)$, $\mathrm{O}(2N+1)$ & $\displaystyle{2 \sum_{k=1}^{\infty} \frac{X_k^2 - \mathds{1}_{\{ k \textrm{ odd}\}} (2/\sqrt{k}) X_k -1}{k}}$ \\
\hline
$\begin{array}{cc} \mathrm{SO} (2N), \mathrm{O}^- (2N+2), \\ \mathrm{USp}(2N) \end{array}$ & $\displaystyle{2 \sum_{k=1}^{\infty} \frac{X_k^2 - \mathds{1}_{\{ k \textrm{ even}\}} (2/\sqrt{k}) X_k -1}{k}}$ \\
\hline
\end{tabular}
\caption{The definition of $\xi_G$ in terms of i.i.d.\ standard Gaussians $X_k, Y_k$, $k \ge 1$.}
\label{tablexiG}
\end{table}

\begin{table}[h]
\centering
\begin{tabular}{|Sc|Sc|}
\hline
$G$ & Characteristic function of $\xi_G$ \\
\hline \hline
$\mathrm{U}(N)$, $\mathrm{SU}(N)$ & $\displaystyle{\Gamma (1-2it) e^{-2\gamma it}}$ \\
\hline
$\mathrm{SO}(2N+1)$, $\mathrm{O}(2N+1)$ & $\displaystyle{\Gamma (1-4it)^{1/2} \exp \left( - \left( 2 \gamma + \frac{\pi^2}{4} \right) it - \frac{2\psi (1-4it) - \psi (1-2it)+\gamma}{4} \right)}$ \\
\hline
$\begin{array}{cc} \mathrm{SO} (2N), \mathrm{O}^- (2N+2), \\ \mathrm{USp}(2N) \end{array}$ & $\displaystyle{\Gamma (1-4it)^{1/2} \exp \left( - \left( 2 \gamma + \frac{\pi^2}{12} \right) it - \frac{\psi (1-2it)+\gamma}{4} \right)}$ \\
\hline
\end{tabular}
\caption{The characteristic function of $\xi_G$ in terms of the gamma function $\Gamma (z)$ and the digamma function $\psi (z)=\Gamma'(z)/\Gamma (z)$. We use the unique complex square root that makes the function $\Gamma (1-4it)^{1/2}$, $t \in \mathbb{R}$ continuous with value $1$ at $t=0$.}
\label{tablexiGchar}
\end{table}

We show in the Appendix that $\mathbb{E} \, \xi_G =0$ and $\mathrm{Var} \, \xi_G = \sigma_G$ for all $G$, and that the density function of $\xi_{\mathrm{U}(N)} = \xi_{\mathrm{SU}(N)}$ is
\[ \frac{1}{2} \exp \left( - \frac{x+2 \gamma}{2} - \exp \left( - \frac{x+2 \gamma}{2} \right) \right), \qquad x \in \mathbb{R} . \]
The density function of $\xi_G$ for the other groups seem not to be elementary functions, however.

Our approach relies on two crucial facts. First, we use the fact that the nontrivial eigenvalues form a deterministic point process, see Section \ref{determinantalsection}. The second crucial fact is that the distance from an arbitrary Borel probability measure $\mu$ on $\mathbb{T}$ to uniformity in Wasserstein metric has the exact formula
\begin{equation}\label{wassersteinexact}
W_2^2 (\mu, \lambda_{\mathbb{T}}) = \sum_{k \in \mathbb{Z} \backslash \{ 0 \}} \frac{|\hat{\mu}(k)|^2}{k^2},
\end{equation}
where $\hat{\mu}(k) = \int_{\mathbb{T}} z^k \, \mathrm{d}\mu (z)$ are the Fourier coefficients of $\mu$. We refer to Graham \cite{GR} for a proof of \eqref{wassersteinexact} based on the explicit solution of the optimal transport problem on $\mathbb{T}$ and the Parseval formula. Unfortunately, there is no similar exact formula for $W_p$, $p \neq 2$, or on higher dimensional spaces.

Theorems \ref{maintheorem} and \ref{limitlawtheorem} are closely connected to some previous results on the empirical spectral measure of random matrices. In the case of $G=\mathrm{U}(N)$, Rains \cite{RA} showed that the normalized number of eigenvalues that fall in a fixed circular arc $J \subseteq \mathbb{T}$ has expected value $\mathbb{E} \, \mu_A (J) = \lambda_{\mathbb{T}}(J)$ and variance
\begin{equation}\label{rains}
\mathrm{Var} \, \mu_A (J) = \frac{\log N}{\pi^2 N^2} + \frac{\gamma +1+\log |2 \sin (\pi \lambda_{\mathbb{T}}(J))|}{\pi^2 N^2} + O \left( \frac{1}{N^3} \right) .
\end{equation}
The centered and scaled sequence $(\mu_A (J) - \lambda_{\mathbb{T}}(J)) \pi N /\sqrt{\log N}$ converges to a standard Gaussian \cite{WI}. A similar central limit theorem holds in other classical compact groups \cite[p.\ 104]{M}, and also in other random matrix models \cite{CL}. We refer to \cite{DI2,DI3,JO,SO} for further results on linear statistics of the eigenvalues.

Now let $J(z,w)$ denote the circular arc connecting $z \in \mathbb{T}$ to $w \in \mathbb{T}$ counterclockwise. It turns out that for an arbitrary Borel probability measure $\mu$, the $L^2$ average over all circular arcs,
\[ \Delta_2 (\mu, \lambda_{\mathbb{T}}) \coloneqq \left( \int_{\mathbb{T}} \int_{\mathbb{T}} \left| \mu (J(z,w)) - \lambda_{\mathbb{T}}(J(z,w)) \right|^2 \, \mathrm{d} \lambda_{\mathbb{T}} (z) \, \mathrm{d} \lambda_{\mathbb{T}}(w) \right)^{1/2} \]
can be expressed in terms of the Fourier coefficients as
\[ \Delta_2 (\mu, \lambda_{\mathbb{T}})^2 = \frac{1}{2 \pi^2} \sum_{k \in \mathbb{Z} \backslash \{ 0 \}} \frac{|\hat{\mu}(k)|^2}{k^2} . \]
This is a commonly used measure of equidistribution known as periodic $L^2$ discrepancy or diaphony \cite{BGM,LE}. In particular, $W_2 (\mu, \lambda_{\mathbb{T}}) = \sqrt{2} \pi \Delta_2(\mu, \lambda_{\mathbb{T}})$. Our results can thus be restated in terms of $\Delta_2 (\mu_A, \lambda_{\mathbb{T}})$, the $L^2$ average of the eigenvalue count in circular arcs. In the case $G=\mathrm{U}(N)$, Bao and He \cite{BH} proved the limit law
\[ 2 \pi^2 N^2 \Delta_2 (\mu_A, \lambda_{\mathbb{T}})^2 - 2\log N -2 \gamma -2 \overset{d}{\to} \xi_{\mathrm{U}(N)} , \]
although they did not connect it to the Wasserstein metric. Our Theorem \ref{limitlawtheorem} for $G=\mathrm{U}(N)$ thus follows from their result.

Theorems \ref{maintheorem} and \ref{limitlawtheorem} can also be reformulated in terms of the characteristic polynomial $P_A (z) = \det (zI - A)$. Let
\[ F_A (z) = \sum_{\lambda \textrm{ nontrivial eigenvalue of }A} \log \left( 1-\frac{\lambda}{z} \right) , \qquad z \in \mathbb{T}, \]
defined with the principal branch of the logarithm. Thus
\[ \exp (F_A (z)) = \prod_{\lambda \textrm{ nontrivial eigenvalue of }A} \left( 1 - \frac{\lambda}{z} \right) = \frac{P_A(z)}{z^{N_0} \prod_{\lambda \textrm{ trivial eigenvalue of }A} (z-\lambda)} \]
is a suitably normalized form of the characteristic polynomial so that $F_A (z)$ has mean zero on $\mathbb{T}$. One readily checks that the Fourier coefficients of $F_A (z)$ are
\[ \frac{1}{2 \pi} \int_0^{2 \pi} F_A (e^{ix}) e^{-ikx} \, \mathrm{d} x = \sum_{\lambda \textrm{ nontrivial eigenvalue of }A} \frac{1}{k \lambda^k} = N_0 \frac{\hat{\mu}_A (-k)}{k} , \]
hence by the Parseval formula and \eqref{wassersteinexact},
\[ \frac{1}{2 \pi }\int_0^{2 \pi} |F_A (e^{ix})|^2 \, \mathrm{d} x = N_0^2 \sum_{k \in \mathbb{Z} \backslash \{ 0 \}} \frac{|\mu_A (k)|^2}{k^2} = N_0^2 W_2^2 (\mu_A, \lambda_{\mathbb{T}}) . \]
In particular, our results can be restated in terms of $\| F_A \|_{L^2 (\mathbb{T})} = N_0 W_2 (\mu_A, \lambda_{\mathbb{T}})$.

In comparison, in the case of $G=\mathrm{U}(N)$, Keating and Snaith \cite{KSn} showed that for any fixed $z \in \mathbb{T}$, $F_A (z) / \sqrt{\log N}$ converges to a standard complex Gaussian. The process $F_A(z)$, $z \in \mathbb{T}$ (without any scaling) also satisfies a functional limit theorem in a certain Sobolev space on $\mathbb{T}$ \cite{HKO}. The limit process is a.s.\ not in $L^2 (\mathbb{T})$, however, suggesting that $\| F_A \|_{L^2 (\mathbb{T})}$ diverges. Our results confirm that $\| F_A \|_{L^2 (\mathbb{T})}^2$ indeed diverges at the rate $2 \log N$ with constant size fluctuations.

As for the maximal value of $\log |P_A (z)| = \mathrm{Re}\, F_A (z)$, in the case $G=\mathrm{U}(N)$, Fyodorov and Keating \cite{FK} conjectured that
\[ \max_{z \in \mathbb{T}} \mathrm{Re} \, F_A (z) - \log N + \frac{3}{4} \log \log N \overset{d}{\to} \Xi , \]
where the random variable $\Xi$ has density function $4 e^{-2x} K_0 (2e^{-x})$, $x \in \mathbb{R}$, with $K_{\nu}(x)$ denoting the modified Bessel function of the second kind. The convergence in distribution part of the conjecture was established in \cite{ABB,CMN,PZ1,PZ2}, but the prediction for the limit distribution has not been verified. Similar results for $\max_{z \in \mathbb{T}} \mathrm{Im} \, F_A(z)$ and $\min_{z \in \mathbb{T}} \mathrm{Im} \, F_A(z)$ are closely related to the eigenvalue count in circular arcs, and in particular to
\[ \Delta_{\infty} (\mu_A, \lambda_{\mathbb{T}}) \coloneqq \sup_{z,w \in \mathbb{T}} |\mu_A (J(z,w)) - \lambda_{\mathbb{T}}(J(z,w))|, \]
the distance to uniformity in a circular Kolmogorov metric. A full limit law for $\Delta_{\infty} (\mu_A, \lambda_{\mathbb{T}})$ nevertheless remains open, see also \cite{MM4}.

\section{Preliminaries}

\subsection{A reduction step}

As observed in \cite[Lemma 2.5]{MM1}, the cases $G=\mathrm{U}(N)$ and $G=\mathrm{SU}(N)$ are equivalent, and similarly, the cases $G=\mathrm{O}(2N+1)$ and $G=\mathrm{SO}(2N+1)$ are also equivalent. For the sake of completeness, we include a short proof.
\begin{lem}\label{reductionlemma} Let $1 \le p < \infty$. The distribution of $W_p (\mu_A, \lambda_{\mathbb{T}})$ is the same for $G= \mathrm{U}(N)$ and $G=\mathrm{SU}(N)$. Further, the distribution of $W_p (\mu_A, \lambda_{\mathbb{T}})$ is the same for $G= \mathrm{O}(2N+1)$ and $G=\mathrm{SO}(2N+1)$.
\end{lem}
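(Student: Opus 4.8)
The plan is to exploit the fact that $W_p(\mu_A,\lambda_{\mathbb{T}})$ depends on $A$ only through its multiset of nontrivial eigenvalues, so it suffices to show that this multiset has the same distribution under the two Haar measures being compared. For the pair $\mathrm{U}(N)$ and $\mathrm{SU}(N)$, I would use the standard decomposition $A = e^{i\theta/N} B$ where $B \in \mathrm{SU}(N)$ and $e^{i\theta}$ ranges over $\mathbb{T}$: more precisely, if $A$ is Haar on $\mathrm{U}(N)$, write $\det A = e^{i\theta}$ and set $B = \zeta^{-1} A$ for a fixed choice of $N$-th root $\zeta$ of $\det A$. The point is that multiplying every eigenvalue of $B$ by the scalar $\zeta$ produces the eigenvalues of $A$; hence the eigenvalues of $A$ are those of a Haar-$\mathrm{SU}(N)$ matrix rotated by a random angle $\phi$ (some branch of an $N$-th root of $\theta$), where $\phi$ is independent of $B$ because Haar measure on $\mathrm{U}(N)$ factors as the product of Haar measure on $\mathrm{SU}(N)$ and the uniform angle. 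Since $W_p(\mu,\lambda_{\mathbb{T}})$ is invariant under rotations of $\mu$ (as $\lambda_{\mathbb{T}}$ and the arc-length metric $d$ are rotation-invariant), we get $W_p(\mu_A,\lambda_{\mathbb{T}}) = W_p(\mu_B,\lambda_{\mathbb{T}})$ pointwise after this coupling, which gives equality in distribution.

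For the pair $\mathrm{O}(2N+1)$ and $\mathrm{SO}(2N+1)$, I would argue similarly but now the scalar in play is $-1$. An orthogonal matrix of odd size $2N+1$ always has $\det A \in \{1,-1\}$ as an eigenvalue, which is exactly the trivial eigenvalue recorded in Table \ref{tablecGsigmaG}; the remaining $2N$ nontrivial eigenvalues come in complex-conjugate pairs $e^{\pm i\alpha_j}$. The map $A \mapsto -A$ is a bijection from $\mathrm{SO}(2N+1)$ to $\mathrm{O}^-(2N+1)$ that pushes forward Haar to Haar (it is left multiplication by the fixed orthogonal matrix $-I$, which has determinant $(-1)^{2N+1} = -1$). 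Negating $A$ negates every eigenvalue, so the nontrivial eigenvalues $e^{\pm i\alpha_j}$ of $A$ become $-e^{\pm i\alpha_j} = e^{\pm i(\alpha_j + \pi)}$; in other words $\mu_{-A}$ is the rotation of $\mu_A$ by $\pi$. By rotation-invariance of $W_p(\cdot,\lambda_{\mathbb{T}})$ again, $W_p(\mu_A,\lambda_{\mathbb{T}}) = W_p(\mu_{-A},\lambda_{\mathbb{T}})$, and since the Haar measure on $\mathrm{O}(2N+1)$ is the average of the Haar measures on $\mathrm{SO}(2N+1)$ and $\mathrm{O}^-(2N+1)$, and the negation map identifies these two pieces in a measure-preserving and $W_p$-preserving way, the distribution of $W_p(\mu_A,\lambda_{\mathbb{T}})$ is the same whether $A$ is drawn from $\mathrm{O}(2N+1)$ or from $\mathrm{SO}(2N+1)$.

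The only point requiring a little care—and the one I would write out most carefully—is the factorization of Haar measure on $\mathrm{U}(N)$ into the Haar measure on $\mathrm{SU}(N)$ together with an independent uniform angle, and the correct bookkeeping of the branch of the $N$-th root so that the coupling $A \leftrightarrow B$ is genuinely defined. Concretely, one fixes a measurable section $s\colon \mathbb{T} \to \mathbb{T}$ with $s(w)^N = w$, sets $B = s(\det A)^{-1} A$, and checks that $B$ is Haar on $\mathrm{SU}(N)$, that $\det A$ is uniform on $\mathbb{T}$, and that these are independent; the homomorphism $\det\colon \mathrm{U}(N) \to \mathbb{T}$ splitting via $w \mapsto \mathrm{diag}(w,1,\dots,1)$ makes this routine. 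No delicate estimate is involved anywhere; the content of the lemma is entirely the two rotation-invariance observations. I would conclude by noting that because $\mu_{-A}$ is a rotation of $\mu_A$ in the odd-orthogonal case, even stronger statements hold (the full empirical process of nontrivial eigenvalues agrees in law up to a deterministic rotation), but the statement of the lemma only needs the $W_p$-functional, so I would not belabor this.
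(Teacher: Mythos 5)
Your proof is correct, and the second half (the pair $\mathrm{O}(2N+1)$, $\mathrm{SO}(2N+1)$) is exactly the paper's argument: push Haar measure from $\mathrm{SO}(2N+1)$ to $\mathrm{O}^-(2N+1)$ by $A \mapsto -A$, observe that negation rotates the nontrivial eigenvalues by $\pi$ and hence leaves $W_p(\mu_A,\lambda_{\mathbb{T}})$ unchanged, and average over the two cosets. Where you genuinely differ is the pair $\mathrm{U}(N)$, $\mathrm{SU}(N)$: the paper simply invokes the Katz--Sarnak lemma \cite[Lemma 1.5.1]{KS}, which says that scalar-invariant bounded Borel functions have the same Haar average on $\mathrm{U}(N)$ and $\mathrm{SU}(N)$, and then applies it to indicator functions of the events $\{W_p(\mu_A,\lambda_{\mathbb{T}}) \le x\}$; you instead build an explicit coupling $B = s(\det A)^{-1}A$ via a measurable $N$-th-root section $s$ and check that $B$ is Haar on $\mathrm{SU}(N)$. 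Your route is self-contained (it in effect reproves the special case of the Katz--Sarnak lemma that is needed) and yields the slightly stronger statement that $W_p(\mu_A,\lambda_{\mathbb{T}}) = W_p(\mu_B,\lambda_{\mathbb{T}})$ surely under the coupling, whereas the paper's route is shorter by citation. One simplification worth noting in your write-up: the uniformity of $\det A$ and its independence from $B$, which you flag as the delicate point, are not actually needed; all you require is that the marginal law of $B$ is Haar on $\mathrm{SU}(N)$, and this follows in one line because $T(A)=s(\det A)^{-1}A$ satisfies $T(CA)=CT(A)$ for every $C \in \mathrm{SU}(N)$, so the pushforward of Haar measure under $T$ is left $\mathrm{SU}(N)$-invariant and hence equals Haar measure on $\mathrm{SU}(N)$ by uniqueness.
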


\begin{proof} Let $\lambda_G$ denote the normalized Haar measure on $G$. A special case of \cite[Lemma 1.5.1]{KS} due to Katz and Sarnak states that for any bounded, Borel measurable function $f: \mathrm{U}(N) \to \mathbb{R}$ that is invariant under multiplication by a scalar unitary matrix, we have $\int_{\mathrm{U}(N)} f \mathrm{d} \lambda_{\mathrm{U}(N)} = \int_{\mathrm{SU}(N)} f \mathrm{d} \lambda_{\mathrm{SU}(N)}$. The function $A \mapsto W_p (\mu_A, \lambda_{\mathbb{T}})$ is clearly invariant, since multiplication by a scalar unitary matrix simply rotates the set of eigenvalues on $\mathbb{T}$, leaving the distance from the rotationally invariant $\lambda_{\mathbb{T}}$ in Wasserstein metric unchanged. Choosing $f(A)=\mathds{1}_{\{ W_p (\mu_A, \lambda_{\mathbb{T}}) \le x \}}$ with an arbitrary $x \ge 0$ thus yields
\[ \lambda_{\mathrm{U}(N)} (\{ A \in \mathrm{U}(N) \, : \, W_p (\mu_A, \lambda_{\mathbb{T}}) \le x \} ) = \lambda_{\mathrm{SU}(N)} \left( \left\{ A \in \mathrm{SU}(N) \, : \, W_p (\mu_A, \lambda_{\mathbb{T}}) \le x \right\} \right) , \]
as claimed.

The map $\mathrm{SO}(2N+1) \to \mathrm{O}^- (2N+1)$, $A \mapsto -A$ preserves the normalized Haar measure. The value of $W_p (\mu_A, \lambda_{\mathbb{T}})$ is clearly invariant under this map, since the nontrivial eigenvalues are simply mapped to their negatives, leaving the distance from $\lambda_{\mathbb{T}}$ in Wasserstein metric unchanged. Therefore for any $x \ge 0$,
\[ \begin{split} \lambda_{\mathrm{SO}(2N+1)} (\{ A \in \mathrm{SO}(2N+1) \, : \, W_p &(\mu_A, \lambda_{\mathbb{T}}) \le x \} ) \\ &= \lambda_{\mathrm{O}^- (2N+1)} (\{ A \in \mathrm{O}^- (2N+1) \, : \, W_p (\mu_A, \lambda_{\mathbb{T}}) \le x \} ) \\ &= \lambda_{\mathrm{O} (2N+1)} (\{ A \in \mathrm{O} (2N+1) \, : \, W_p (\mu_A, \lambda_{\mathbb{T}}) \le x \} ), \end{split} \]
as claimed.
\end{proof}

\subsection{The empirical spectral measure as a determinantal point process}\label{determinantalsection}

Let $G=\mathrm{U}(N), \mathrm{SO}(2N+1), \mathrm{SO}(2N), \mathrm{O}^- (2N+2)$ or $\mathrm{USp}(2N)$, and let $A \in G$ be a random matrix distributed according to the normalized Haar measure on $G$. We write the nontrivial eigenvalues of $A$ in the form given in Table \ref{tabledeterminantal}. In each case, the points $\theta_n \,\, (1 \le n \le N)$ form a determinantal point process on the interval $[a,b]$ with kernel function $K(x,y)$ with respect to the normalized Lebesgue measure on $[a,b]$, where $K(x,y)$ and $[a,b]$ are given in Table \ref{tabledeterminantal}. A detailed proof of this fact was given by Meckes \cite[Chapter 3.2]{M}, see also Katz and Sarnak \cite[Chapter 5]{KS}.

\begin{table}[h]
\centering
\begin{tabular}{|Sc|Sc|Sc|Sc|}
\hline
$G$ & Nontrivial eigenvalues & $K(x,y)$ & $[a,b]$ \\
\hline \hline
$\mathrm{U}(N)$ & $e^{i \theta_n} \,\, (1 \le n \le N)$ & $\displaystyle{\sum_{j=0}^{N-1} e^{i j(x-y)}}$ & $[0,2 \pi]$ \\
\hline
$\mathrm{SO}(2N+1)$ & $e^{\pm i \theta_n} \,\, (1 \le n \le N)$ & $\displaystyle{2 \sum_{j=0}^{N-1} \sin \left( \frac{2j+1}{2} x \right) \sin \left( \frac{2j+1}{2} y \right)}$ & $[0,\pi]$ \\
\hline
$\mathrm{SO}(2N)$ & $e^{\pm i \theta_n} \,\, (1 \le n \le N)$ & $\displaystyle{1+2\sum_{j=1}^{N-1} \cos (j x) \cos (j y)}$ & $[0,\pi]$ \\
\hline
$\mathrm{O}^- (2N+2), \mathrm{USp}(2N)$ & $e^{\pm i \theta_n} \,\, (1 \le n \le N)$ & $\displaystyle{2 \sum_{j=1}^{N} \sin (jx) \sin (jy)}$ & $[0,\pi]$ \\
\hline
\end{tabular}
\caption{The nontrivial eigenvalues, the kernel function $K(x,y)$ and the interval $[a,b]$. The angles are chosen so that $\theta_n \in [a,b]$.}
\label{tabledeterminantal}
\end{table}

We refer to \cite{HKPV} for a general introduction to determinantal point processes. For our purposes, it suffices to know that the joint intensity of the point process $\theta_n \,\, (1 \le n \le N)$ is given by the determinant of the self-adjoint, positive semidefinite matrix with entries $K(x_i, x_j)$. In particular, for any integer $M \ge 1$ and any Borel measurable function $f: [a,b]^M \to \mathbb{C}$,
\begin{equation}\label{determinantal}
\begin{split} \mathbb{E} \sum_{\substack{n_1, n_2, \ldots, n_M=1 \\ \textrm{pairwise distinct}}}^N f(\theta_{n_1}, &\theta_{n_2}, \ldots, \theta_{n_M}) = \\ &\frac{1}{(b-a)^M} \int_{[a,b]^M} \det (K(x_i, x_j))_{i,j=1}^M f(x_1, x_2, \ldots, x_M) \, \mathrm{d}x_1 \mathrm{d}x_2 \cdots \mathrm{d}x_M \end{split}
\end{equation}
provided that the integral exists.

The exact formula \eqref{wassersteinexact} for the empirical spectral measure $\mu_A$ takes the form
\begin{equation}\label{wassersteinUN}
W_2^2 \left( \mu_A , \lambda_{\mathbb{T}} \right) = \frac{2}{N^2} \sum_{k=1}^{\infty} \frac{1}{k^2} \left| \sum_{n=1}^N e^{i k \theta_n} \right|^2 \qquad \textrm{for } G=\mathrm{U}(N),
\end{equation}
resp.
\begin{equation}\label{wassersteinothergroups}
W_2^2 \left( \mu_A , \lambda_{\mathbb{T}} \right) = \frac{2}{N^2} \sum_{k=1}^{\infty} \frac{1}{k^2} \left( \sum_{n=1}^N \cos (k \theta_n) \right)^2
\end{equation}
for $G=\mathrm{SO}(2N+1)$, $\mathrm{SO}(2N)$, $\mathrm{O}^- (2N+2)$ and $\mathrm{USp}(2N)$.

\section{Exact formulas for the expected value and the variance}\label{exactsection}

In this section, we find exact formulas for the expected value and the variance of $W_2^2 (\mu_A, \lambda_{\mathbb{T}})$. Throughout, $x^+ = \max \{ x, 0 \}$ denotes the positive part of a real number $x$.
\begin{thm}\label{exacttheorem} Let $G= \mathrm{U}(N), \mathrm{SU}(N), \mathrm{SO}(2N+1), \mathrm{O}(2N+1), \mathrm{SO}(2N), \mathrm{O}^- (2N+2)$ or $\mathrm{USp}(2N)$, and let $A \in G$ be a random matrix distributed according to the normalized Haar measure on $G$. We have
\[ \begin{split} \mathbb{E} \, W_2^2 (\mu_A, \lambda_{\mathbb{T}}) &= \frac{2}{N_0^2} \sum_{k=1}^{\infty} \frac{\min \{ k,N_0 \} + \eta (k)}{k^2}, \\ \mathrm{Var} \, W_2^2 (\mu_A, \lambda_{\mathbb{T}}) &= \frac{4}{N_0^4} \sum_{k=1}^{\infty} \frac{T(k)}{k^4} + \frac{4}{N_0^4} \sum_{\substack{k,\ell =1 \\ k \neq \ell}}^{\infty} \frac{V(k,\ell)+\delta (k,\ell)}{k^2 \ell^2} , \end{split} \]
where $\eta (k)$, $T(k)$, $V(k,\ell)$ and $\delta (k,\ell)$ are as follows.
\begin{enumerate}
\item[(i)] Let $G=\mathrm{U}(N)$ or $\mathrm{SU}(N)$. Then
\[ \begin{split} \eta (k) &= 0, \\ T(k) &=\min \{ k^2, N^2 \} + \min \{ 2k, N \} - 2 \min \{ k,N \} , \\ V(k,\ell) &=2(N-\max \{ k,\ell \} )^+ - (N-k-\ell)^+ - (N-|k-\ell|)^+ , \\ \delta (k,\ell) &= 0 . \end{split} \]

\item[(ii)] Let $G=\mathrm{SO}(2N+1)$ or $\mathrm{O}(2N+1)$. For any $a,b,c \in \mathbb{Z}$, define $\varepsilon(a) = \mathds{1}_{\{ 1 \le a \le 2N-1, \,\, a \textrm{ odd} \}}$ and $\alpha (a,b,c) = (\min \{ a,N \} + \min \{ b,N \} -c)^+$. Then
\[ \begin{split} \eta (k) &=\varepsilon (k) , \\ T(k) &= 2 \min \{ k^2, (2N)^2 \} + 4 \varepsilon (k) \min \{ k, 2N \} + 6(\min \{ k, N \} - \min \{ k,2N \}) +4 (\varepsilon (3k) - \varepsilon (k)) , \\ V(k,\ell) &= 8 (N-\max \{ k,\ell \})^+ - 2(2N - |k-\ell|)^+ - 2(2N-k-\ell)^+ \\ &\phantom{={}}+ 4 \alpha (\min \{ k,\ell \}, \max \{ k,\ell \}, \max \{ k,\ell \}) + 4 \alpha (\ell, \ell -k, \ell) + 4 \alpha (k, k-\ell, k) \\ &\phantom{={}}+ 4 \alpha (k+\ell, \ell, k+\ell) + 4\alpha (k+\ell, k, k+\ell) - 8 \alpha (k+\ell, \min \{ k,\ell \}, k+\ell) \\ &\phantom{={}} -4\alpha (k,k,k+\ell) -4 \alpha (\ell, \ell, k+\ell) , \\ \delta(k,\ell) &=2 (\varepsilon (\ell) \varepsilon (|2k-\ell|) - \varepsilon (2k+\ell)) + 2(\varepsilon (k) \varepsilon (|2\ell -k|) - \varepsilon (2\ell +k)) + 2 (\varepsilon (|k-\ell|) - \varepsilon (k+\ell)) \\ &\phantom{={}} +4 \varepsilon (\ell) \varepsilon (|2k -\ell|) + 4 \varepsilon (2k+\ell) + 4 \varepsilon (k) \varepsilon (|2\ell -k|) + 4 \varepsilon (2\ell +k) \\ &\phantom{={}} - 8 \varepsilon (2\max \{ k,\ell \} - \min \{ k,\ell \}) - 8 \varepsilon (\max \{ k,\ell \}) . \end{split} \]

\item[(iii)] Let $G=\mathrm{SO}(2N)$. For any $a,b,c \in \mathbb{Z}$, define $\varepsilon (a) = \mathds{1}_{\{ 1 \le a \le 2N-2, \,\, a \textrm{ even} \}}$ and $\alpha (a,b,c) = (\min \{ a, N \} + \min \{ b,N \} -c-1 )^+$. Then
\[ \begin{split} \eta (k) &=\varepsilon (k) + \mathds{1}_{\{ k \le 2N-1 \}} - \mathds{1}_{\{ k \le N-1 \}}, \\ T(k) &=2 \min \{ (k+1)^2, (2N)^2 \} + (6+ \mathds{1}_{\{ k \le N-1 \}} 2) (2N-k-1)^+ - 6N (1+\mathds{1}_{\{ k \le N-1 \}}) \\ &\phantom{={}} - 4 (N-k)^+ + \mathds{1}_{\{ k \le N-1 \}}6 + 4 \varepsilon (k) \left( \min \{ k+1, 2N \} - \mathds{1}_{\{ k \le N-1 \}} \right) + 4 (\varepsilon (3k) - \varepsilon (k)) , \\ V(k,\ell) &=8 (N - \max \{ k,\ell \} -1)^+ - 2 (2N - |k-\ell|-1)^+ -2 (2N-k-\ell-1)^+ \\ &\phantom{={}}+4 \alpha (\min \{ k,\ell \}, \max \{ k ,\ell \}, \max \{ k ,\ell \}) + 4 \alpha (\ell, \ell-k, \ell) + 4 \alpha (k, k-\ell, k) \\ &\phantom{={}}+4 \alpha (k+\ell, \ell, k+\ell) + 4 \alpha (k+\ell, k, k+\ell) -8 \alpha (k+\ell, \min \{ k ,\ell \}, k+\ell) \\ &\phantom{={}} -4 \alpha (k,k,k+\ell) - 4 \alpha (\ell, \ell, k+\ell) - \mathds{1}_{\{ k+\ell \le N-1 \}} + \mathds{1}_{\{ \max \{ k ,\ell \} \le N-1 \}} 16 \\ &\phantom{={}}+ \mathds{1}_{\{ |k-\ell| \le N-1 \}} (-3+\mathds{1}_{\{ \min \{ k ,\ell \} \le N-1 \}} 16 -\mathds{1}_{\{ k \le N-1 \}} 6 - \mathds{1}_{\{ \ell \le N-1 \}} 6 ) \\ &\phantom{={}} - \mathds{1}_{\{ |2k-\ell|+\ell \le 2N-2 \}} 4 - \mathds{1}_{\{ |2\ell -k|+k \le 2N-2 \}} 4, \\ \delta(k,\ell) &=4 \varepsilon (k) \varepsilon (\ell) \left( 1-\varepsilon (k+\ell) \right) + 2 \left( \varepsilon (|k-\ell|) - \varepsilon (k+\ell) \right) \\ &\phantom{={}}+\mathds{1}_{\{ \ell \neq 2k \}} \left( 6 \varepsilon (\ell) \varepsilon (|2k-\ell|) + 2 \varepsilon (2k+\ell) -8 \varepsilon (|k-\ell| +k) \right) \\ &\phantom{={}} + \mathds{1}_{\{ k \neq 2\ell \}} \left( 6 \varepsilon (k) \varepsilon (|2\ell -k|) +2 \varepsilon (2\ell +k) -8 \varepsilon (|\ell -k| +\ell) \right) . \end{split} \]

\item[(iv)] Let $G=\mathrm{O}^- (2N+2)$ or $\mathrm{USp}(2N)$. For any $a,b,c \in \mathbb{Z}$, define $\varepsilon (a) = \mathds{1}_{\{ 1 \le a \le 2N, \,\, a \textrm{ even} \}}$ and $\alpha (a,b,c) = (\min \{ a-1, N \} + \min (b-1, N) - c+1)^+$. Then
\[ \begin{split} \eta (k) &=\varepsilon (k) + \mathds{1}_{\{ k \le N \}} - \mathds{1}_{\{ k \le 2N \}}, \\ T(k) &=2 \min \{ (k-1)^2, (2N)^2 \} + (6-\mathds{1}_{\{ k \le N \}} 2) (2N-k+1)^+ - 8 (N-k)^+ - \mathds{1}_{\{ k \le N \}} 8 \\ &\phantom{={}} + \mathds{1}_{\{ k \le N/2 \}} 2 + 4 \varepsilon (k) (\min \{ k-1, 2N \} + \mathds{1}_{\{ k \le N \}}) - \mathds{1}_{\{ k > N \}} 6N + 4(\varepsilon (3k) - \varepsilon (k)) , \\ V(k,\ell) &= 8 (N-\max \{ k,\ell \})^+ - 2 (2N-|k-\ell|+1)^+ - 2(2N-k-\ell+1)^+ \\ &\phantom{={}} +4 \alpha (\min \{ k,\ell \}, \max \{ k,\ell \}, \max \{ k,\ell \}) + 4 \alpha (\ell, \ell-k,\ell) + 4 \alpha (k,k-\ell,k) \\ &\phantom{={}} +4 \alpha (k+\ell, \ell, k+\ell) + 4 \alpha (k+\ell, k, k+\ell) - 8 \alpha (k+\ell, \min \{ k,\ell \}, k+\ell) \\ &\phantom{={}} -4 \alpha (k,k,k+\ell) - 4 \alpha (\ell, \ell, k+\ell) + \mathds{1}_{\{ k+\ell \le N \}} + \mathds{1}_{\{ |k-\ell| \le N \}} 3 \\ &\phantom{={}} +\mathds{1}_{\{ |2k-\ell|+\ell \le 2N \}} 4 + \mathds{1}_{\{ |2\ell -k|+k \le 2N \}} 4 - \mathds{1}_{\{ |k-\ell| \le N \}} 2(\mathds{1}_{\{ k \le N \}} + \mathds{1}_{\{ \ell \le N \}}) , \\ \delta (k,\ell) &= 4 \varepsilon (k) \varepsilon (\ell) ( \varepsilon (k+\ell) -1) + 2 (\varepsilon (|k-\ell|) - \varepsilon (k+\ell)) \\ &\phantom{={}}+ \mathds{1}_{\{ \ell \neq 2k \}} \left( 6 \varepsilon (\ell) \varepsilon (|2k-\ell|) + 2 \varepsilon (2k+\ell) - 8 \varepsilon (|k-\ell|+k) \right) \\ &\phantom{={}}+ \mathds{1}_{\{ k \neq 2\ell \}} \left( 6 \varepsilon (k) \varepsilon (|2\ell-k|) + 2 \varepsilon (2\ell+k) - 8 \varepsilon (|\ell-k|+\ell) \right) . \end{split} \]
\end{enumerate}
\end{thm}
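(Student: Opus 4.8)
The plan is to compute $\mathbb{E}\,W_2^2(\mu_A,\lambda_{\mathbb{T}})$ and $\mathrm{Var}\,W_2^2(\mu_A,\lambda_{\mathbb{T}})$ directly from the Fourier identities \eqref{wassersteinUN}--\eqref{wassersteinothergroups} together with the determinantal moment formula \eqref{determinantal}. By Lemma \ref{reductionlemma} it suffices to treat $G=\mathrm{U}(N),\mathrm{SO}(2N+1),\mathrm{SO}(2N),\mathrm{O}^-(2N+2),\mathrm{USp}(2N)$. Write $S_k=\sum_{n=1}^N e^{ik\theta_n}$ in the unitary case and $C_k=\sum_{n=1}^N\cos(k\theta_n)$ in the remaining cases, so that $W_2^2$ equals $\tfrac{2}{N^2}\sum_{k\ge1}k^{-2}|S_k|^2$ resp.\ $\tfrac{2}{N^2}\sum_{k\ge1}k^{-2}C_k^2$. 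All summands are nonnegative, so by Tonelli's theorem $\mathbb{E}\,W_2^2$ and $\mathbb{E}\,(W_2^2)^2$ may be computed term by term: the former becomes a single series of second moments $\mathbb{E}|S_k|^2$ (resp.\ $\mathbb{E}\,C_k^2$), and $\mathrm{Var}\,W_2^2$ a double series of covariances $\mathrm{Cov}(|S_k|^2,|S_\ell|^2)$ (resp.\ $\mathrm{Cov}(C_k^2,C_\ell^2)$), which I would split into the diagonal $k=\ell$, yielding the $T(k)$ terms, and the off-diagonal $k\ne\ell$, yielding the $V(k,\ell)+\delta(k,\ell)$ terms.

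The engine for each moment is to expand the power of the linear statistic as a sum over tuples of eigenvalue indices --- e.g.\ $|S_k|^2|S_\ell|^2=\sum_{a,b,c,d=1}^N e^{ik(\theta_a-\theta_b)+i\ell(\theta_c-\theta_d)}$, with the analogous but longer expansion of $C_k^2C_\ell^2$ after writing each cosine as $\tfrac12(e^{i(\cdot)}+e^{-i(\cdot)})$ --- and then to group the tuples according to the set partition of the index set recording which indices coincide. On a block with $M$ pairwise distinct indices the expectation is given by \eqref{determinantal}, which I would evaluate by expanding $\det(K(x_i,x_j))_{i,j=1}^M=\sum_\sigma\mathrm{sgn}(\sigma)\prod_{i=1}^M K(x_i,x_{\sigma(i)})$ over the permutations $\sigma$ of $\{1,\dots,M\}$, substituting the explicit kernel from Table \ref{tabledeterminantal} as a sum of exponentials (or of products of sines and cosines), and integrating monomial by monomial over $[a,b]^M$. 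Each surviving term then reduces to counting the integer points $(j_1,\dots,j_M)$ in the box $\{0,\dots,N-1\}^M$ (or in the appropriate shifted index set) that satisfy a linear system read off from $\sigma$ and from the frequencies $k,\ell$; these counts are exactly the $\min\{\cdot,\cdot\}$, positive-part $(\cdot)^+$ and $\alpha(\cdot,\cdot,\cdot)$ expressions appearing in the statement.

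For the mean only the one- and two-point functions enter: the diagonal part of the linear statistic reduces to integrals of $K(x,x)$ against a single exponential (the source of $\eta(k)$), while the off-diagonal part is a single $M=2$ application of \eqref{determinantal}, in which the determinant is $K(x,x)K(y,y)-|K(x,y)|^2$ and only the $|K(x,y)|^2$ part survives the integration; assembling these pieces yields $\min\{k,N_0\}+\eta(k)$. For the variance one needs the full four-point analysis --- all $15$ set partitions of a $4$-element set, hence $M\in\{1,2,3,4\}$ with up to $4!$ permutation terms in the top determinant --- and then subtracts $(\mathbb{E}\,W_2^2)^2$, i.e.\ the product of two-point contributions, which cancels the disconnected part of the four-point function. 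In the unitary case the kernel frequencies are integers and the domain is the whole circle, so every nonvanishing term is a clean lattice-point count and no corrections are needed, i.e.\ $\eta\equiv\delta\equiv0$. In the orthogonal and symplectic cases the kernels carry half-integer or shifted frequencies and the domain is $[0,\pi]$, so the normalized integrals $\tfrac1\pi\int_0^\pi$ of products of trigonometric functions vanish only subject to parity constraints, and the lattice-point counts must be restricted to arithmetic progressions; the gap between the unrestricted and the restricted count is exactly what is packaged into the indicators $\varepsilon(\cdot)$, into $\eta(k)$ for the mean, and into $\delta(k,\ell)$ for the off-diagonal variance term. The last step is to collect all contributions group by group and carry out the lengthy but elementary simplifications that merge the resulting collections of $\min$, positive-part and indicator terms into the compact $T(k)$, $V(k,\ell)$, $\delta(k,\ell)$ in (i)--(iv).

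The main obstacle is the volume and organization of the bookkeeping rather than any isolated difficulty: the four-point computation fans out over all coincidence patterns, each determinant over up to $24$ permutations, each kernel and each power of $\cos(k\theta_n)$ over several exponentials, and --- for the non-unitary groups --- each resulting count is itself a parity-sensitive case analysis in the sizes of $k,\ell$ relative to $N$. Maintaining a correct ledger of which terms belong to $T$, which to $V$, and which to the parity correction $\delta$, and checking that the totals collapse to the stated closed forms uniformly in $k$ and $\ell$, is where essentially all the effort lies; by contrast the interchange of expectation with the infinite sums is free from Tonelli because every term is nonnegative.
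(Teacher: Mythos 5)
Your proposal is correct and follows essentially the same route as the paper: reduce to one group per pair via Lemma \ref{reductionlemma}, apply the Fourier identity \eqref{wassersteinexact} and the determinantal formula \eqref{determinantal}, expand the second moment over index tuples grouped by coincidence pattern, reduce each kernel integral to a (parity-restricted) lattice-point count producing the $\min$, $(\cdot)^+$, $\alpha$ and $\varepsilon$ terms, and subtract $(\mathbb{E}\,W_2^2)^2$ to cancel the disconnected part. The only differences are organizational: the paper evaluates the $4\times 4$ determinant for $\mathrm{U}(N)$ by successive integration using reproducing-kernel identities rather than a full permutation expansion, and packages the non-unitary computations into the cyclic kernel-product integrals $\Pi(\cdot)$ of Lemma \ref{gammalemmaSO2N+1}, but these are bookkeeping variants of exactly what you describe.
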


\subsection{Proof for $\mathrm{U}(N)$ and $\mathrm{SU}(N)$}

\begin{proof}[Proof of Theorem \ref{exacttheorem} (i)] By Lemma \ref{reductionlemma}, it is enough to prove the theorem for $G=\mathrm{U}(N)$. We rely on the exact formula \eqref{wassersteinUN} for the Wasserstein metric, which we expand to
\begin{equation}\label{wassersteinUNexpand}
W_2^2 (\mu_A, \lambda_{\mathbb{T}}) = \frac{2}{N^2} \sum_{k=1}^{\infty} \frac{1}{k^2} \Bigg( N + \sum_{\substack{n_1, n_2 =1 \\ n_1 \neq n_2}}^N e^{ik (\theta_{n_1} - \theta_{n_2})} \Bigg) .
\end{equation}

The points $\theta_n \,\, (1 \le n \le N)$ form a determinantal point process on $[0,2 \pi]$ with kernel function $K(x,y) = \sum_{j=0}^{N-1} e^{ij(x-y)}$ with respect to the normalized Lebesgue measure on $[0,2 \pi]$. Observe that for any $y,z \in \mathbb{R}$ and any $k \in \mathbb{Z}$,
\begin{equation}\label{KUNformulas}
\begin{split} \frac{1}{2 \pi} \int_0^{2 \pi} K(y,x) e^{ikx} \, \mathrm{d}x &= \mathds{1}_{\{ 0 \le k \le N-1 \}} e^{iky} , \\ \frac{1}{2 \pi} \int_0^{2 \pi} K(x,y) K(z,x) e^{ikx} \, \mathrm{d}x &= \left\{ \begin{array}{ll} \sum_{j=0}^{N-k-1} e^{i(j+k)z - i jy} & \textrm{if } k \ge 0, \\ \sum_{j=0}^{N-|k|-1} e^{ijz - i (j+|k|)y} & \textrm{if } k<0 . \end{array} \right. \end{split}
\end{equation}
The special case $z=y$ reads
\begin{equation}\label{KUNformulasspecial}
\frac{1}{2 \pi} \int_0^{2 \pi} |K(x,y)|^2 e^{ikx} \, \mathrm{d}x = (N-|k|)^+ e^{iky} .
\end{equation}

The main formula \eqref{determinantal} for determinantal point processes and the identity \eqref{KUNformulasspecial} show that for any integer $k \ge 1$,
\[ \mathbb{E} \sum_{\substack{n_1, n_2=1 \\ n_1 \neq n_2}}^N e^{ik(\theta_{n_1} - \theta_{n_2})} = \frac{1}{(2 \pi)^2} \int_0^{2 \pi} \!\! \int_0^{2 \pi} (N^2 - |K(x,y)|^2) e^{ik(x-y)} \, \mathrm{d} x \, \mathrm{d}y = -(N-k)^+ . \]
The exact formula \eqref{wassersteinUNexpand} together with the fact that $N-(N-k)^+=\min \{ k,N \}$ immediately yield
\[ \mathbb{E} \, W_2^2 (\mu_A, \lambda_{\mathbb{T}}) = \frac{2}{N^2} \sum_{k=1}^{\infty} \frac{\min \{ k,N \}}{k^2}, \]
as claimed.

Squaring \eqref{wassersteinUNexpand} and using the previous formula leads to
\begin{equation}\label{secondmomentUN}
\begin{split} \mathbb{E} \, W_2^4 (\mu_A, \lambda_{\mathbb{T}}) &= \frac{4}{N^4} \sum_{k,\ell=1}^{\infty} \frac{1}{k^2 \ell^2} \mathbb{E} \Bigg( N + \sum_{\substack{n_1, n_2=1 \\ n_1 \neq n_2}}^N e^{ik(\theta_{n_1} - \theta_{n_2})} \Bigg) \Bigg( N + \sum_{\substack{m_1, m_2=1 \\ m_1 \neq m_2}}^N e^{ik(\theta_{m_1} - \theta_{m_2})} \Bigg) \\ &= \frac{4}{N^4} \sum_{k,\ell=1}^{\infty} \left( N^2 - N(N-k)^+ - N(N-\ell)^+ + E(k,\ell) \right) , \end{split}
\end{equation}
where
\[ E(k,\ell) = \mathbb{E} \sum_{\substack{n_1, n_2, m_1, m_2=1 \\ n_1 \neq n_2, \,\, m_1 \neq m_2}}^N e^{ik(\theta_{n_1} - \theta_{n_2}) + i \ell (\theta_{m_1} - \theta_{m_2})} . \]
Fix integers $k, \ell \ge 1$, and let us decompose the sum $E(k, \ell)$ based on the cardinality of the index set $\{ n_1, n_2, m_1, m_2 \}$.\\

\noindent\textbf{Case 1.} Consider the terms for which $\{ n_1, n_2, m_1, m_2 \}$ has cardinality 4. The main formula \eqref{determinantal} for determinantal point processes shows that
\begin{multline*}
S_4 \coloneqq \mathbb{E} \sum_{\substack{n_1, n_2, m_1, m_2=1 \\ \textrm{pairwise distinct}}}^N e^{i k(\theta_{n_1} - \theta_{n_2}) + i \ell (\theta_{m_1} - \theta_{m_2})} \\ =\frac{1}{(2 \pi)^4}\int_0^{2 \pi} \!\! \int_0^{2 \pi} \!\! \int_0^{2 \pi} \!\! \int_0^{2 \pi} \det (K(x_i, x_j))_{i,j=1}^4 e^{i k (x_1-x_2) + i \ell (x_3 - x_4)} \, \mathrm{d}x_1 \mathrm{d}x_2 \mathrm{d}x_3 \mathrm{d}x_4 .
\end{multline*}
Let us first integrate with respect to $x_1$. Expanding the $4 \times 4$ determinant along the first column and noting that the subdeterminant corresponding to the entry $K(x_1,x_1)=N$ does not depend on $x_1$ leads to
\[ \frac{1}{2 \pi} \int_0^{2 \pi} \det (K(x_i, x_j))_{i,j=1}^4 e^{i k x_1} \, \mathrm{d}x_1 = I_1 + I_2 + I_3, \]
where, using the identities \eqref{KUNformulas} and \eqref{KUNformulasspecial},
\[ \begin{split} I_1 = I_1 (x_2, x_3, x_4) &= \frac{1}{2 \pi} \int_0^{2 \pi} -K(x_2,x_1) \det \left( \begin{array}{ccc} K(x_1 , x_2) & K(x_1 , x_3) & K(x_1 , x_4) \\ K(x_3 , x_2) & N & K(x_3 , x_4) \\ K(x_4 , x_2) & K(x_4 , x_3) & N  \end{array} \right) e^{i k x_1} \, \mathrm{d}x_1 \\ &= - (N-k)^+ e^{i k x_2} \det \left( \begin{array}{cc} N & K(x_3 , x_4) \\ K(x_4 , x_3) & N \end{array} \right) \\ &\phantom{={}}+\sum_{j=0}^{N-k-1} e^{i(j+k)x_2 - i j x_3} \det \left( \begin{array}{cc} K(x_3 , x_2) & K(x_3 , x_4) \\ K(x_4 , x_2) & N \end{array} \right) \\ &\phantom{={}}-\sum_{j=0}^{N-k-1} e^{i (j+k)x_2 - i j x_4} \det \left( \begin{array}{cc} K(x_3 , x_2) & N \\ K(x_4 , x_2) & K(x_4 , x_3) \end{array} \right) , \\ I_2 = I_2 (x_2, x_3, x_4) &=\frac{1}{2 \pi} \int_0^{2 \pi} K(x_3 , x_1) \det \left( \begin{array}{ccc} K(x_1 , x_2) & K(x_1 , x_3) & K(x_1 , x_4) \\ N & K(x_2 , x_3) & K(x_2 , x_4) \\ K(x_4 , x_2) & K(x_4 , x_3) & N \end{array} \right) e^{i k x_1} \, \mathrm{d}x_1 \\ &= \sum_{j=0}^{N-k-1} e^{i (j+k)x_3 - i j x_2} \det \left( \begin{array}{cc} K(x_2 , x_3) & K(x_2 , x_4) \\ K(x_4 , x_3) & N \end{array} \right) \\ &\phantom{={}}- (N-k)^+ e^{i k x_3} \det \left( \begin{array}{cc} N & K(x_2 , x_4) \\ K(x_4 , x_2) & N \end{array} \right) \\ &\phantom{={}}+\sum_{j=0}^{N-k-1} e^{i(j+k) x_3 - i j x_4} \det \left( \begin{array}{cc} N & K(x_2 , x_3) \\ K(x_4 , x_2) & K(x_4 , x_3) \end{array} \right) , \end{split} \]
and
\[ \begin{split} I_3 = I_3 (x_2, x_3, x_4) &=\frac{1}{2 \pi} \int_0^{2 \pi} -K(x_4 , x_1) \det \left( \begin{array}{ccc} K(x_1 , x_2) & K(x_1 , x_3) & K(x_1 , x_4) \\ N & K(x_2 , x_3) & K(x_2 , x_4) \\ K(x_3 , x_2) & N & K(x_3 , x_4) \end{array} \right) e^{i k x_1} \, \mathrm{d}x_1 \\ &= -\sum_{j=0}^{N-k-1} e^{i(j+k)x_4 - i j x_2} \det \left( \begin{array}{cc} K(x_2 , x_3) & K(x_2 , x_4) \\ N & K(x_3 , x_4) \end{array} \right) \\ &\phantom{={}}+ \sum_{j=0}^{N-k-1} e^{i(j+k)x_4 - i j x_3} \det \left( \begin{array}{cc} N & K(x_2 , x_4) \\ K(x_3 , x_2) & K(x_3 , x_4) \end{array} \right) \\ &\phantom{={}}- (N-k)^+ e^{i k x_4} \det \left( \begin{array}{cc} N & K(x_2 , x_3) \\ K(x_3 , x_2) & N \end{array} \right) . \end{split} \]
Next, we integrate with respect to $x_2$. The identities \eqref{KUNformulas} and \eqref{KUNformulasspecial} show that
\[ \begin{split} J_1 = J_1 (x_3,x_4) \coloneqq \frac{1}{2 \pi} \int_0^{2 \pi} I_1 e^{-i k x_2} \, \mathrm{d}x_2 &= -(N-k)^+ (N^2 - |K(x_3 , x_4)|^2) \\ &\phantom{={}}+ N(N-k)^+ - \sum_{j=0}^{N-k-1} e^{ij(x_4-x_3)} K(x_3 , x_4) \\ &\phantom{={}}+N(N-k)^+ - \sum_{j=0}^{N-k-1} e^{ij(x_3 - x_4)} K(x_4 , x_3) , \\ J_2 = J_2 (x_3, x_4) \coloneqq \frac{1}{2 \pi} \int_0^{2 \pi} I_2 e^{-i k x_2} \, \mathrm{d}x_2 &=N(N-k)^+ - \sum_{j=0}^{N-k-1} e^{i(j+k)(x_3-x_4)} K(x_4 , x_3) \\ &\phantom{={}}+((N-k)^+)^2 e^{ik(x_3-x_4)} \\ &\phantom{={}}-\sum_{j,j'=0}^{N-k-1} e^{i(j-j')(x_3-x_4)}, \end{split} \]
and
\[ \begin{split} J_3 = J_3 (x_3, x_4) \coloneqq \frac{1}{2 \pi} \int_0^{2 \pi} I_3 e^{-i k x_2} \, \mathrm{d}x_2 &=N(N-k)^+ - \sum_{j=0}^{N-k-1} e^{i(j+k)(x_4-x_3)} K(x_3 , x_4) \\ &\phantom{={}}-\sum_{j,j'=0}^{N-k-1} e^{i(j-j')(x_4-x_3)} \\ &\phantom{={}}+((N-k)^+)^2 e^{ik(x_4 - x_3)} . \end{split} \]
As $J_1$, $J_2$ and $J_3$ depend only on $x_3-x_4$, integrating with respect to $x_3$ and $x_4$ gives
\[ \begin{split} \frac{1}{(2 \pi)^2} \int_0^{2 \pi} \!\! &\int_0^{2 \pi} (J_1+J_2+J_3) e^{i \ell (x_3-x_4)} \, \mathrm{d}x_3 \mathrm{d}x_4 = \\ \frac{1}{2 \pi} \int_0^{2 \pi} \Bigg( &(N-k)^+ |K(x,0)|^2 + ((N-k)^+)^2 (e^{ikx} + e^{-ikx}) - \sum_{j=0}^{N-k-1} (e^{-jix} K(x,0) + e^{ijx} K(0,x)) \\ &-\sum_{j=0}^{N-k-1} (e^{i(j+k)x} K(0,x) + e^{-i (j+k)x} K(x,0)) - 2 \sum_{j,j'=0}^{N-k-1} e^{i(j-j')x} \Bigg) e^{i \ell x} \, \mathrm{d}x . \end{split} \]
The identities \eqref{KUNformulas} and \eqref{KUNformulasspecial} finally yield
\begin{equation}\label{UNS4}
S_4 = (N-k)^+ (N-\ell)^+ +((N-k)^+)^2 \mathds{1}_{\{ k=\ell \}} - 4 (N-k-\ell)^+ - 2 (N-\max \{ k,\ell \})^+ .
\end{equation}

\noindent\textbf{Case 2.} Consider the terms for which $\{ n_1, n_2, m_1, m_2 \}$ has cardinality 3. Since $n_1 \neq n_2$ and $m_1 \neq m_2$, there are four possibilities: $n_1=m_1$, $n_1=m_2$, $n_2=m_1$ or $n_2 = m_2$. The main formula \eqref{determinantal} for determinantal point processes shows that
\[ \begin{split} \mathbb{E} \sum_{\substack{n_1, n_2, m_1, m_2 =1 \\ n_1, n_2, m_2 \textrm{ pairwise distinct} \\ n_1=m_1}}^N &e^{ik(\theta_{n_1} - \theta_{n_2}) + i \ell (\theta_{m_1} - \theta_{m_2})} \\ &= \frac{1}{(2 \pi)^3} \int_0^{2 \pi} \!\! \int_0^{2 \pi} \!\! \int_0^{2 \pi} \det (K(x_i , x_j))_{i,j=1}^3 e^{i(k+\ell) x_1 - i k x_2 - i \ell x_3} \, \mathrm{d}x_1 \mathrm{d}x_2 \mathrm{d}x_3 . \end{split} \]
Let us first integrate with respect to $x_1$. Expanding the $3 \times 3$ determinant along the first column and noting that the subdeterminant corresponding to the entry $K(x_1, x_1)=N$ does not depend on $x_1$ leads to
\[ \frac{1}{2 \pi} \int_0^{2 \pi} \det (K(x_i , x_j))_{i,j=1}^3 e^{i(k+\ell) x_1} \, \mathrm{d}x_1 = I_1'+I_2', \]
where, using the identities \eqref{KUNformulas} and \eqref{KUNformulasspecial},
\[ \begin{split} I_1' = I_1'(x_2, x_3) &=\frac{1}{2 \pi} \int_0^{2 \pi} -K(x_2 , x_1) \det \left( \begin{array}{cc} K(x_1 , x_2) & K(x_1 , x_3) \\ K(x_3 , x_2) & N \end{array} \right) e^{i(k+\ell)x_1} \, \mathrm{d}x_1 \\ &= -N(N-k-\ell)^+ e^{i(k+\ell)x_2} + \sum_{j=0}^{N-k-\ell-1} e^{i(j+k+\ell) x_2 - i jx_3} K(x_3 , x_2) , \end{split} \]
and
\[ \begin{split} I_2' = I_2'(x_2, x_3) &= \frac{1}{2 \pi} \int_0^{2 \pi} K(x_3 , x_1) \det \left( \begin{array}{cc} K(x_1 , x_2) & K(x_1 , x_3) \\ N & K(x_2 , x_3) \end{array} \right) e^{i(k+\ell)x_1} \, \mathrm{d}x_1 \\ &=-N(N-k-\ell)^+ e^{i(k+\ell)x_3} + \sum_{j=0}^{N-k-\ell-1} e^{i(j+k+\ell) x_3 - i j x_2} K(x_2 , x_3) . \end{split} \]
Integrating with respect to $x_2$ and $x_3$ gives
\[ \begin{split} \frac{1}{(2 \pi)^2} \int_0^{2 \pi} \!\! \int_0^{2 \pi} (I_1' + I_2') e^{-i k x_2 - i \ell x_3} \, \mathrm{d}x_2 \mathrm{d}x_3 &= \frac{1}{2 \pi} \int_0^{2 \pi} \sum_{j=0}^{N-k-\ell-1} \left( e^{i(j+\ell)x} K(0,x) + e^{-i(j+k)x} K(x,0) \right) \, \mathrm{d}x \\&= 2(N-k-\ell)^+ . \end{split} \]
Similar computations show that the contribution of the other three cases are
\[ \begin{split} n_1=m_2 \quad &\mapsto \quad 2(N-\max \{ k, \ell \})^+ - N(N-k)^+ \mathds{1}_{\{ k=\ell \}} , \\ n_2=m_1 \quad &\mapsto \quad 2(N-\max \{ k, \ell \})^+ - N(N-k)^+ \mathds{1}_{\{ k=\ell \}} , \\ n_2=m_2 \quad &\mapsto \quad 2 (N-k-\ell)^+ . \end{split} \]
Therefore
\begin{equation}\label{UNS3}
\begin{split} S_3 &\coloneqq \mathbb{E} \sum_{\substack{n_1, n_2, m_1, m_2 =1 \\ n_1 \neq n_2, \,\, m_1 \neq m_2 \\ |\{ n_1, n_2, m_1, m_2 \}|=3}}^N e^{ik(\theta_{n_1} - \theta_{n_2}) + i \ell (\theta_{m_1} - \theta_{m_2})} \\ &= 4 (N-k-\ell)^+ + 4 (N-\max \{ k ,\ell \})^+ - 2N (N-k)^+ \mathds{1}_{\{ k = \ell \}} . \end{split}
\end{equation}

\noindent\textbf{Case 3.} Consider the terms for which $\{ n_1, n_2, m_1, m_2 \}$ has cardinality 2. Since $n_1 \neq n_2$ and $m_1 \neq m_2$, there are two possibilities: either $n_1=m_1$, $n_2=m_2$, or $n_1=m_2$, $n_2=m_1$. The main formula \eqref{determinantal} for determinantal point processes and the identity \eqref{KUNformulasspecial} show that
\[ \begin{split} \mathbb{E} \sum_{\substack{n_1, n_2, m_1, m_2 =1 \\ n_1 = m_1 \neq n_2 = m_2}}^N e^{ik(\theta_{n_1} - \theta_{n_2}) + i \ell (\theta_{m_1} - \theta_{m_2})} &= \frac{1}{(2 \pi)^2} \int_0^{2 \pi} \!\! \int_0^{2 \pi} (N^2 - |K(x,y)|^2) e^{i(k+\ell)(x-y)} \, \mathrm{d}x \, \mathrm{d}y \\ &= -(N-k-\ell)^+ , \end{split} \]
and
\[ \begin{split} \mathbb{E} \sum_{\substack{n_1, n_2, m_1, m_2 =1 \\ n_1 = m_2 \neq n_2 = m_1}}^N e^{ik(\theta_{n_1} - \theta_{n_2}) + i \ell (\theta_{m_1} - \theta_{m_2})} &= \frac{1}{(2 \pi)^2} \int_0^{2 \pi} \!\! \int_0^{2 \pi} (N^2 - |K(x,y)|^2) e^{i(k-\ell)(x-y)} \, \mathrm{d}x \, \mathrm{d}y \\ &= N^2 \mathds{1}_{\{ k=\ell \}} -(N-|k-\ell|)^+ . \end{split} \]

Adding \eqref{UNS4}, \eqref{UNS3} and the previous two formulas, and noting that $((N-k)^+)^2 -2N (N-k)^+ +N^2 = \min \{ k^2, N^2 \}$ finally show that
\[ E(k,\ell) = (N-k)^+ (N-\ell)^+ + 2 (N-\max \{ k,\ell \})^+ - (N-k-\ell)^+ - (N-|k-\ell|)^+ + \min \{ k^2, N^2 \} \mathds{1}_{\{ k=\ell \}} . \]
In particular, \eqref{secondmomentUN} yields
\[ \begin{split} \mathbb{E} \, W_2^4 (\mu_A, \lambda_{\mathbb{T}}) &= \frac{4}{N^4} \sum_{k, \ell =1}^{\infty} \frac{(N-(N-k)^+) (N-(N-\ell)^+)}{k^2 \ell^2} \\ &\phantom{={}}+\frac{4}{N^4} \sum_{k=1}^{\infty} \frac{\min \{ k^2, N^2 \} + 2(N-k)^+ - (N-2k)^+ -N}{k^4} \\ &\phantom{={}}+\frac{4}{N^4} \sum_{\substack{k, \ell=1 \\ k \neq \ell}}^{\infty} \frac{2(N-\max \{ k,\ell \} )^+ - (N-k-\ell)^+ - (N-|k-\ell|)^+}{k^2 \ell^2} . \end{split} \]
The first line of the previous formula is
\[ \Bigg( \frac{2}{N^2} \sum_{k=1}^{\infty} \frac{N-(N-k)^+}{k^2} \Bigg)^2 = \left( \mathbb{E} W_2^2 (\mu_A, \lambda_{\mathbb{T}}) \right)^2 , \]
hence
\[ \begin{split} \mathrm{Var} \, W_2^2 (\mu_A, \lambda_{\mathbb{T}}) &= \frac{4}{N^4} \sum_{k=1}^{\infty} \frac{\min \{ k^2, N^2 \} + \min \{ 2k,N \} - 2 \min \{ k,N \}}{k^4} \\ &\phantom{={}}+\frac{4}{N^4} \sum_{\substack{k, \ell=1 \\ k \neq \ell}}^{\infty} \frac{2(N-\max \{ k,\ell \} )^+ - (N-k-\ell)^+ - (N-|k-\ell|)^+}{k^2 \ell^2} , \end{split} \]
as claimed. This finishes the proof of Theorem \ref{exacttheorem} for $G=\mathrm{U}(N)$ and $\mathrm{SU}(N)$.
\end{proof}

\subsection{Proof for $\mathrm{SO}(2N+1)$ and $\mathrm{O}(2N+1)$}

\begin{proof}[Proof of Theorem \ref{exacttheorem} (ii)] By Lemma \ref{reductionlemma}, it is enough to prove the theorem for $G=\mathrm{SO}(2N+1)$. We rely on the exact formula \eqref{wassersteinothergroups} for the Wasserstein metric, which we expand to
\begin{equation}\label{wassersteinothergroupsexpand}
W_2^2 (\mu_A, \lambda_{\mathbb{T}}) = \frac{2}{N^2} \sum_{k=1}^{\infty} \frac{1}{k^2} \Bigg( \frac{N}{2} + \frac{1}{2} \sum_{n=1}^N \cos (2 k \theta_n) + \sum_{\substack{n_1, n_2=1 \\ n_1 \neq n_2}}^N \cos (k \theta_{n_1}) \cos (k \theta_{n_2}) \Bigg) .
\end{equation}
The points $\theta_n \,\, (1 \le n \le N)$ form a determinantal point process on $[0,\pi]$ with kernel function $K(x,y)=2 \sum_{j=0}^{N-1} \sin (\frac{2j+1}{2} x) \sin (\frac{2j+1}{2}y)$ with respect to the normalized Lebesgue measure on $[0,\pi]$.

For any $a,b,c,d \in \mathbb{Z}$, define
\begin{equation}\label{gammaabcd}
\begin{split} \Pi (a) &= \frac{1}{\pi} \int_0^{\pi} K(x,x) \cos (ax) \, \mathrm{d}x, \\ \Pi (a,b) &= \frac{1}{\pi^2} \int_0^{\pi} \!\! \int_0^{\pi} K(x,y)^2 \cos (ax) \cos (by) \, \mathrm{d}x \, \mathrm{d}y, \\ \Pi (a,b,c) &= \frac{1}{\pi^3} \int_0^{\pi} \!\! \int_0^{\pi} \!\! \int_0^{\pi} K(x,y) K(y,z) K(z,x) \cos (ax) \cos (by) \cos (cz) \, \mathrm{d}x \, \mathrm{d}y \, \mathrm{d}z, \\ \Pi (a,b,c,d) &= \begin{aligned}[t] \frac{1}{\pi^4} \int_0^{\pi} \!\! \int_0^{\pi} \!\! \int_0^{\pi} \!\! \int_0^{\pi} &K(x,y) K(y,z) K(z,w) K(w,x) \\ &\times \cos (ax) \cos (by) \cos (cz) \cos (dw) \, \mathrm{d}x \, \mathrm{d}y \, \mathrm{d}z \, \mathrm{d}w. \end{aligned} \end{split}
\end{equation}
Note that these are even in all variables, and invariant under cyclic permutation of the arguments. Define further $\varepsilon(a) = \mathds{1}_{\{ 1 \le a \le 2N-1, \,\, a \textrm{ odd} \}}$ and $\alpha (a,b,c) = (\min \{ a,N \} + \min \{ b,N \} -c)^+$.

\begin{lem}\label{gammalemmaSO2N+1} We have $\Pi (0) = \Pi (0,0)=N$. Let $k, \ell \ge 1$ be integers. We have
\[ \Pi (k) = - \frac{\varepsilon (k)}{2}, \quad \Pi (k,\ell) = \left\{ \begin{array}{ll} \frac{(2N-k)^+}{4} & \textrm{if } k=\ell, \\ - \frac{\varepsilon (k+\ell)}{2} & \textrm{if } k \neq \ell, \end{array} \right. \quad \Pi (0,k) = - \frac{\varepsilon (k)}{2}, \]
and
\[ \begin{split} \Pi (k, \ell , \ell) &= \left\{ \begin{array}{ll} \frac{(2N-k)^+}{8} & \textrm{if } k=2\ell, \\ -\frac{3 \varepsilon (k+2\ell) + \varepsilon (k) \varepsilon (|2\ell -k|)}{8} & \textrm{if } k \neq 2 \ell , \end{array} \right. \\ \Pi (k,\ell, k+\ell) &= \frac{(N-k-\ell)^+}{4} + \frac{\alpha (k+\ell, \ell, k+\ell) + \alpha (k+\ell, k, k+\ell)}{8}, \end{split} \]
as well as $\Pi (0,k,k) = \frac{(2N-k)^+}{4}$ and $\Pi (k,k,k,k) = \frac{(N-k)^+}{4} + \frac{(2N-k)^+}{16}$. If $k \neq \ell$, then
\[ \begin{split} \Pi (k,k,\ell,\ell) &= \frac{(N-\max \{ k,\ell \})^+ + (N-k-\ell)^+}{8} + \frac{\alpha (\min \{ k,\ell \}, \max \{ k,\ell \}, \max \{ k,\ell \})}{16} \\ &\phantom{={}}+ \frac{\alpha (k+\ell, \ell, k+\ell) + \alpha (k+\ell, k, k+\ell) + \alpha (\ell, \ell -k, \ell) + \alpha (k, k-\ell, k)}{16}, \\ \Pi (k,\ell,k,\ell) &= \frac{(N-k-\ell)^+}{4} + \frac{\alpha (k+\ell, \min \{ k,\ell \}, k+\ell)}{4} + \frac{\alpha (k,k,k+\ell) + \alpha (\ell, \ell, k+\ell)}{8} . \end{split} \]
\end{lem}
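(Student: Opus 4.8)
The plan is to adapt the structure of the $\mathrm{U}(N)$ argument, but since the kernel $K(x,y)=2\sum_{j=0}^{N-1}\sin\tfrac{(2j+1)x}{2}\sin\tfrac{(2j+1)y}{2}$ has no elementary ``one--step'' reproducing identity against the test functions $\cos(kx)$, I will instead expand each integral in \eqref{gammaabcd} directly into a finite combinatorial sum and evaluate it by counting lattice points. Two elementary facts carry the whole computation. First, the product--to--sum identity
\[ 2\sin\tfrac{(2j+1)t}{2}\sin\tfrac{(2m+1)t}{2}=\cos\big((j-m)t\big)-\cos\big((j+m+1)t\big), \]
which turns a product of two of the half--integer sines appearing in $K$ into a difference of cosines at \emph{integer} frequencies. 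Second, the orthogonality relations $\tfrac1\pi\int_0^\pi\cos(at)\,\mathrm dt=\mathds 1_{\{a=0\}}$ and $\tfrac1\pi\int_0^\pi\cos(at)\cos(bt)\,\mathrm dt=\mathds 1_{\{a=b=0\}}+\tfrac12\mathds 1_{\{a=b\ge1\}}$ for nonnegative integers $a,b$. The structural point is that in each of $\Pi(a,b)$, $\Pi(a,b,c)$ and $\Pi(a,b,c,d)$ every integration variable lies in exactly two of the kernel factors; so after inserting the defining sum for each $K$ (introducing one index $j_i$ per factor) and applying the product--to--sum identity in each variable, every resulting one--dimensional integral is a product of two integer--frequency cosines and is computed by the orthogonality relations. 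What is left is a sum --- over $j_1,\dots\in\{0,\dots,N-1\}$ and over a finite choice of signs --- of products of indicators of linear (in)equalities in the indices.

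Carrying this out in increasing order of difficulty: for $\Pi(a)$ one works with the diagonal value $K(x,x)=\sum_{j=0}^{N-1}\big(1-\cos((2j+1)x)\big)$, giving $\Pi(0)=N$ and $\Pi(k)=-\tfrac12\varepsilon(k)$, while $\Pi(0,0)=\Pi(0)$ follows from the projection property $\tfrac1\pi\int_0^\pi K(x,y)^2\,\mathrm dy=K(x,x)$. For $\Pi(a,b)$ and $\Pi(0,b)$, expanding $K(x,y)^2$ as a double sum over $(j,m)$ leads to index conditions of the form ``$|j-m|=k$ or $j+m+1=k$'' (and likewise with $\ell$), and the counts $\#\{(j,m)\in\{0,\dots,N-1\}^2:\,j+m+1=k\}$ and $\#\{(j,m):\,|j-m|=k\}$ yield exactly the $(2N-k)^+$ and $\varepsilon(k+\ell)$ terms. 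For the triple integrals $\Pi(k,\ell,\ell)$, $\Pi(k,\ell,k+\ell)$, $\Pi(0,k,k)$ the expansion has $8$ terms in $(j_1,j_2,j_3)$, and for the quadruple integrals $\Pi(k,k,k,k)$, $\Pi(k,k,\ell,\ell)$, $\Pi(k,\ell,k,\ell)$ it has $16$ terms in $(j_1,j_2,j_3,j_4)$; in each term the ``sum--type'' conditions force parity constraints and pin down indices (this is where the $\varepsilon$'s come from), and the residual sum over the remaining index is a count of the shape $(\min\{a,N\}+\min\{b,N\}-c)^+=\alpha(a,b,c)$. The cyclic symmetry and evenness of $\Pi(a,b,c)$ and $\Pi(a,b,c,d)$ recorded after \eqref{gammaabcd} halve the casework.

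The main obstacle is the bookkeeping in the quadruple case. One must run through the sign patterns of the four cosine--differences, discard the incompatible ones (a term survives only if some choice of signs $\pm k\pm k\pm\ell\pm\ell$ vanishes around the cycle), split into the cases $k<\ell$, $k=\ell$, $k>\ell$, and check that the resulting lattice--point counts reassemble \emph{exactly} into the stated combinations of $\alpha$'s and $(N-\cdot)^+$'s with the correct integer coefficients and denominators $8$, $16$. The delicate spots are the degenerate configurations --- two indices forced to coincide, or an index forced outside $\{0,\dots,N-1\}$ --- at which the orthogonality relations contribute $\mathds 1_{\{a=0\}}$ rather than $\tfrac12\mathds 1_{\{a=0\}}$; getting these right is what makes the boundary terms come out correctly. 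Once every count is tabulated, the claimed formulas follow by collecting terms.
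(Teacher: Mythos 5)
Your plan is essentially the paper's own proof of Lemma \ref{gammalemmaSO2N+1}: the paper likewise expands each kernel factor, uses the product-to-sum identity so that every integration variable carries only integer-frequency cosines, integrates via the orthogonality relations on $[0,\pi]$, and reduces each $\Pi$ to lattice-point counts of conditions of the form $|j-j'|=k$ versus $j+j'+1=k$, with the same $8$- and $16$-term expansions, parity exclusions, $k=\ell$ versus $k\neq\ell$ case splits, and counts assembling into the $(2N-k)^+$, $(N-\cdot)^+$, $\varepsilon$ and $\alpha(a,b,c)$ terms. You do not execute the detailed casework, but the outline, including the degenerate-frequency point relevant to $\Pi(0,k)$ and $\Pi(0,k,k)$, matches the paper's argument step for step.
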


\begin{proof} Recall the trigonometric identities
\[ \cos x \cos y = \frac{\cos (x-y) + \cos (x+y)}{2} \qquad \textrm{and} \qquad \sin x \sin y = \frac{\cos (x-y) - \cos (x+y)}{2}, \]
and observe that for any $a,b \in \mathbb{Z}$,
\[ \frac{1}{\pi} \int_0^{\pi} \cos (ax) \, \mathrm{d}x = \mathds{1}_{\{ a=0 \}} \qquad \textrm{and} \qquad \frac{1}{\pi} \int_0^{\pi} \cos (ax) \cos (bx) \, \mathrm{d}x = \frac{1}{2} \mathds{1}_{\{ a=b \}} + \frac{1}{2} \mathds{1}_{\{ a = -b \}} . \]
The identities $\Pi (0) = \Pi (0,0)=N$ are straightforward. Fix integers $k, \ell \ge 1$.

We have
\[ \Pi (k) = \sum_{j=0}^{N-1} \frac{1}{\pi} \int_0^{\pi} (1-\cos ((2j+1)x)) \cos (k x) \, \mathrm{d}x = \sum_{j=0}^{N-1} \frac{-1}{2} \mathds{1}_{\{ 2j+1 =k \}} = - \frac{\varepsilon (k)}{2}, \]
as claimed. Consider now
\[ \begin{split} \Pi (k,\ell) &= \begin{aligned}[t] \sum_{j,j'=0}^{N-1} \frac{1}{\pi^2} \int_0^{\pi} \!\! \int_0^{\pi} &\left( \cos ((j-j') x ) - \cos ((j+j'+1)x) \right) \\ &\times \left( \cos ((j-j') y ) - \cos ((j+j'+1)y) \right) \cos (k x) \cos (\ell y) \, \mathrm{d}x \, \mathrm{d}y \end{aligned} \\ &= \sum_{j,j'=0}^{N-1} \frac{1}{4} \left( \mathds{1}_{\{ |j-j'| =k \}} - \mathds{1}_{\{ j+j'+1 =k \}} \right) \left( \mathds{1}_{\{ |j-j'| =\ell \}} - \mathds{1}_{\{ j+j'+1 =\ell \}} \right) . \end{split} \]
If $k=\ell$, then
\[ \Pi (k,\ell) = \sum_{j,j'=0}^{N-1} \frac{1}{4} \left( \mathds{1}_{\{ |j-j'| =k \}} + \mathds{1}_{\{ j+j'+1 =k \}} \right) = \frac{(2N-k)^+}{4}, \]
as claimed. If $k \neq \ell$, then
\[ \begin{split} \left. \begin{split} |j-j'|&=k \\ j+j'+1&=\ell \end{split} \right\} &\Longleftrightarrow \,\, \{ 2j+1, 2j'+1 \} = \{ \ell +k, \ell -k \} , \\ \left. \begin{split} |j-j'|&=\ell \\ j+j'+1&=k \end{split} \right\} &\Longleftrightarrow \,\, \{ 2j+1, 2j'+1 \} = \{ k+\ell, k-\ell \} . \end{split} \]
Hence
\[ \Pi (k,\ell) = \sum_{j,j'=0}^{N-1} \frac{-1}{4} \mathds{1}_{\{ \{ 2j+1, 2j'+1 \} = \{ \ell +k, |\ell -k| \} \}} = - \frac{\varepsilon (k+\ell)}{2}, \]
as claimed. Further,
\[ \Pi (0,k) = \sum_{j,j'=0}^{N-1} \frac{1}{2} \mathds{1}_{\{ j=j' \}} \left( \mathds{1}_{\{ |j-j'|=k \}} - \mathds{1}_{\{ j+j'+1=k \}} \right) = \sum_{j=0}^{N-1} \frac{-1}{2} \mathds{1}_{\{ 2j+1=k \}} = - \frac{\varepsilon (k)}{2}, \]
as claimed.

We similarly deduce
\begin{multline*} \Pi (k, \ell, \ell) = \\ \sum_{j,j',j''=0}^{N-1} \frac{1}{8} \left( \mathds{1}_{\{ |j-j'|=k \}} - \mathds{1}_{\{ j+j'+1=k \}} \right) \left( \mathds{1}_{\{ |j'-j''|=\ell \}} - \mathds{1}_{\{ j'+j''+1=\ell \}} \right) \left( \mathds{1}_{\{ |j''-j|=\ell \}} - \mathds{1}_{\{ j''+j+1=\ell \}} \right) .
\end{multline*}
Expanding the triple product gives $8$ terms. Assume first that $k=2\ell$. The cases
\[ \left. \begin{split} |j-j'|&=k \\ |j'-j''| &= \ell \\ j'' + j +1&=\ell \end{split} \right\} \qquad \left. \begin{split} |j-j'|&=k \\ j'+j''+1 &= \ell \\ |j'' - j| &=\ell \end{split} \right\} \qquad \left. \begin{split} j+j' +1&=k \\ |j'-j''| &= \ell \\ |j'' - j|&=\ell \end{split} \right\} \qquad \left. \begin{split} j+j'+1&=k \\ j'+j''+1 &= \ell \\ j'' + j +1&=\ell \end{split} \right\} \]
are impossible for parity reasons, whereas $|j-j'|=k$, $j'+j''+1= \ell$, $j'' + j +1=\ell$ is impossible as the latter two equations imply $j=j'$, but by assumption $k \neq 0$. The remaining three cases are
\[ \begin{split}
\left. \begin{split} |j-j'|&=k \\ |j'-j''|&=\ell \\ |j''-j| &= \ell \end{split} \right\} \, &\Longleftrightarrow \, j, j'', j' \textrm{ is an arithmetic progression with difference } \pm \ell , \\ \left. \begin{split} j+j'+1&=k \\ |j'-j''|&=\ell \\ j''+j+1 &= \ell \end{split} \right\} \, &\Longleftrightarrow \, \left( \begin{array}{c} j \\ j' \\ j'' \end{array} \right) = \left( \begin{array}{c} j \\ 2\ell -1-j \\ \ell -1-j \end{array} \right), \\ \left. \begin{split} j+j'+1&=k \\ j'+j''+1&=\ell \\ |j''-j| &= \ell \end{split} \right\} \, &\Longleftrightarrow \, \left( \begin{array}{c} j \\ j' \\ j'' \end{array} \right) =  \left( \begin{array}{c} 2\ell -1-j' \\ j' \\ \ell-1-j' \end{array} \right).
\end{split} \]
Elementary calculations lead to the desired formula for $\Pi (k, \ell, \ell)$, $k =2\ell$.

Assume next that $k \neq 2\ell$. Then the cases
\[ \left. \begin{split} |j-j'|&=k \\ |j'-j''| &= \ell \\ |j'' - j|&=\ell \end{split} \right\} \qquad \left. \begin{split} |j-j'|&=k \\ j'+j''+1 &= \ell \\ j'' + j+1 &=\ell \end{split} \right\} \qquad \left. \begin{split} j+j' +1&=k \\ |j'-j''| &= \ell \\ j'' + j+1&=\ell \end{split} \right\} \qquad \left. \begin{split} j+j'+1&=k \\ j'+j''+1 &= \ell \\ |j'' - j|&=\ell \end{split} \right\} \]
are impossible. The remaining four cases, after discarding negative solutions are
\[ \begin{split}
\left. \begin{split} |j-j'|&=k \\ |j'-j''|&=\ell \\ j''+j+1 &= \ell \end{split} \right\} \, &\Longleftrightarrow \, \left( \begin{array}{c} 2j+1 \\ 2j'+1 \\ 2j''+1 \end{array} \right) = \left( \begin{array}{c} 2 \ell -k \\ 2 \ell +k \\ k \end{array} \right)  , \\ \left. \begin{split} |j-j'|&=k \\ j'+j''+1&=\ell \\ |j''-j| &= \ell \end{split} \right\} \, &\Longleftrightarrow \, \left( \begin{array}{c} 2j+1 \\ 2j'+1 \\ 2j''+1 \end{array} \right) = \left( \begin{array}{c} 2 \ell +k \\ 2 \ell -k \\ k \end{array} \right)  , \\ \left. \begin{split} j+j'+1&=k \\ |j'-j''|&=\ell \\ |j''-j| &= \ell \end{split} \right\} \, &\Longleftrightarrow \, \left( \begin{array}{c} 2j+1 \\ 2j'+1 \\ 2j''+1 \end{array} \right) \in \left\{ \left( \begin{array}{c} k-2 \ell \\ k+2 \ell \\ k \end{array} \right) , \left( \begin{array}{c} k+2 \ell \\ k-2 \ell \\ k \end{array} \right) , \left( \begin{array}{c} k \\ k \\ k+2\ell \end{array} \right) , \left( \begin{array}{c} k \\ k \\ k - 2 \ell \end{array} \right) \right\} , \\ \left. \begin{split} j+j'+1&=k \\ j'+j''+1&=\ell \\ j''+j+1 &= \ell \end{split} \right\} \, &\Longleftrightarrow \, \left( \begin{array}{c} 2j+1 \\ 2j'+1 \\ 2j''+1 \end{array} \right) = \left( \begin{array}{c} k \\ k \\ 2\ell -k \end{array} \right) .
\end{split} \]
Elementary calculations lead to the desired formula for $\Pi (k,\ell, \ell)$, $k \neq 2\ell$.

We similarly deduce
\begin{multline*} \Pi (k, \ell, k+\ell) = \\ \sum_{j,j',j''=0}^{N-1} \frac{1}{8} \left( \mathds{1}_{\{ |j-j'|=k \}} - \mathds{1}_{\{ j+j'+1=k \}} \right) \left( \mathds{1}_{\{ |j'-j''|=\ell \}} - \mathds{1}_{\{ j'+j''+1=\ell \}} \right) \left( \mathds{1}_{\{ |j''-j|=k+\ell \}} - \mathds{1}_{\{ j''+j+1=k+\ell \}} \right) .
\end{multline*}
Expanding the triple product gives $8$ terms. The cases
\[ \left. \begin{split} |j-j'|&=k \\ |j'-j''| &= \ell \\ j'' + j +1&=k+\ell \end{split} \right\} \qquad \left. \begin{split} |j-j'|&=k \\ j'+j''+1 &= \ell \\ |j'' - j| &=k+\ell \end{split} \right\} \qquad \left. \begin{split} j+j' +1&=k \\ |j'-j''| &= \ell \\ |j'' - j|&=k+\ell \end{split} \right\} \qquad \left. \begin{split} j+j'+1&=k \\ j'+j''+1 &= \ell \\ j'' + j +1&=k+\ell \end{split} \right\} \]
are impossible for parity reasons. The case $j+j'+1=k$, $j'+j''+1=\ell$, $|j''-j|=k+\ell$ is also impossible, as $|\ell -k| \neq k+\ell$. The remaining three cases are
\[ \begin{split}
\left. \begin{split} |j-j'|&=k \\ |j'-j''|&=\ell \\ |j''-j| &= k+\ell \end{split} \right\} \, &\Longleftrightarrow \, \left( \begin{array}{c} j \\ j' \\ j'' \end{array} \right) \in \left\{ \left( \begin{array}{c} j \\ j+k \\ j+k+\ell \end{array} \right) , \left( \begin{array}{c} j \\ j-k \\ j-k-\ell \end{array} \right) \right\} , \\ \left. \begin{split} |j-j'|&=k \\ j'+j''+1&=\ell \\ j''+j+1 &=k+\ell \end{split} \right\} \, &\Longleftrightarrow \, \left( \begin{array}{c} j \\ j' \\ j'' \end{array} \right) = \left( \begin{array}{c} j \\ j-k \\ k+\ell-1-j \end{array} \right) , \\ \left. \begin{split} j+j'+1&=k \\ |j'-j''|&=\ell \\ j''+j+1 &=k+ \ell \end{split} \right\} \, &\Longleftrightarrow \, \left( \begin{array}{c} j \\ j' \\ j'' \end{array} \right) = \left( \begin{array}{c} j \\ k-1-j \\ k+\ell-1-j \end{array} \right) .
\end{split} \]
As an example, note that
\[ \begin{split} \sum_{j,j',j''=0}^{N-1} \mathds{1}_{\left\{ \left( \begin{array}{c} j \\ j' \\ j'' \end{array} \right) = \left( \begin{array}{c} j \\ j-k \\ k+\ell-1-j \end{array} \right) \right\}} &= | [0,N-1] \cap [k,N+k-1] \cap [k+\ell -N, k+\ell -1] \cap \mathbb{Z}| \\ &= |[\max \{ k,k+\ell-N \}, \min \{ N-1, k+\ell-1 \}] \cap \mathbb{Z}| \\ &= (\min \{ N, k+\ell \} - \max \{ k,k+\ell -N \} )^+ \\ &= (\min \{ N, k+\ell \}  + \min \{ -k,N-k-\ell \})^+ \\ &= ( \min \{ N,k+\ell \} + \min \{ \ell, N \} - k-\ell)^+ \\ &= \alpha (k+\ell, \ell, k+\ell) . \end{split} \]
Similar computations lead to the desired formula for $\Pi (k,\ell,k+\ell)$. Further,
\[ \Pi (0,k,k) = \sum_{j,j',j''=0}^{N-1} \frac{1}{4} \mathds{1}_{\{ j=j' \}} \left( \mathds{1}_{\{ |j'-j''|=k \}} - \mathds{1}_{\{ j'+j''+1=k \}} \right) \left( \mathds{1}_{\{ |j''-j|=k \}} - \mathds{1}_{\{ j''+j+1=k \}} \right) = \Pi (k,k), \]
as claimed.

We similarly deduce
\[ \begin{split} \Pi (k,k,\ell,\ell) = \sum_{j_1, j_2, j_3, j_4=0}^{N-1} &\frac{1}{16} \left( \mathds{1}_{\{ |j_1-j_2|=k \}} - \mathds{1}_{\{ j_1+j_2+1=k \}} \right) \left( \mathds{1}_{\{ |j_2-j_3|=k \}} - \mathds{1}_{\{ j_2+j_3+1=k \}} \right) \\ &\times \left( \mathds{1}_{\{ |j_3-j_4|=\ell \}} - \mathds{1}_{\{ j_3+j_4+1=\ell \}} \right) \left( \mathds{1}_{\{ |j_4-j_1|=\ell \}} - \mathds{1}_{\{ j_4+j_1+1=\ell \}} \right) . \end{split} \]
Expanding the quadruple product gives 16 terms. Every case containing an odd number of equations of the form $j+j'+1=a$ is impossible for parity reasons. The cases
\[ \left. \begin{split} j_1+j_2+1&=k \\ |j_2-j_3|&=k \\ j_3 + j_4+1&=\ell \\ |j_4 - j_1|&=\ell \end{split} \right\} \qquad \left. \begin{split} |j_1 - j_2|&=k \\ j_2 + j_3+1&=k \\ |j_3-j_4|&=\ell \\ j_4+j_1+1&=\ell \end{split} \right\} \]
are also impossible, as they imply $j_1, j_2, j_3, j_4 < \max \{ k,\ell \}$, but then an equation with absolute value cannot hold.

Assume first that $k=\ell$. The remaining six cases are
\[ \begin{split} \left. \begin{split} |j_1 - j_2|&=k \\ |j_2-j_3|&=k \\ |j_3-j_4|&=k \\ |j_4 - j_1|&=k \end{split} \right\} &\Longleftrightarrow \, \left( \begin{array}{c} j_1 \\ j_2 \\ j_3 \\ j_4 \end{array} \right) \in \left\{ \begin{split} &\left( \begin{array}{c} j_1 \\ j_1+k \\ j_1+2k \\ j_1+k \end{array} \right) , \left( \begin{array}{c} j_1 \\ j_1+k \\ j_1 \\ j_1+k \end{array} \right), \left( \begin{array}{c} j_1 \\ j_1+k \\ j_1 \\ j_1-k \end{array} \right) , \\ &\left( \begin{array}{c} j_1 \\ j_1-k \\ j_1-2k \\ j_1-k \end{array} \right) ,  \left( \begin{array}{c} j_1 \\ j_1-k \\ j_1 \\ j_1-k \end{array} \right) ,  \left( \begin{array}{c} j_1 \\ j_1-k \\ j_1 \\ j_1+k \end{array} \right) \end{split} \right\} , \\ \left. \begin{split} |j_1 - j_2|&=k \\ |j_2-j_3|&=k \\ j_3 + j_4 +1&=k \\ j_4 + j_1 +1&=k \end{split} \right\} &\Longleftrightarrow \, \left( \begin{array}{c} j_1 \\ j_2 \\ j_3 \\ j_4 \end{array} \right) = \left( \begin{array}{c} j_1 \\ j_1+k \\ j_1 \\ k-1-j_1 \end{array} \right), \\ \left. \begin{split} j_1 + j_2 +1&=k \\ j_2 + j_3 +1&=k \\ |j_3-j_4|&=k \\ |j_4 - j_1|&=k \end{split} \right\} &\Longleftrightarrow \, \left( \begin{array}{c} j_1 \\ j_2 \\ j_3 \\ j_4 \end{array} \right) = \left( \begin{array}{c} j_1 \\ k-1-j_1 \\ j_1 \\ j_1+k \end{array} \right) ,  \end{split} \]

\[ \begin{split} \left. \begin{split} j_1 + j_2 +1&=k \\ j_2 + j_3 +1&=k \\ j_3 + j_4 +1&=k \\ j_4 + j_1 +1&=k \end{split} \right\} &\Longleftrightarrow \, \left( \begin{array}{c} j_1 \\ j_2 \\ j_3 \\ j_4 \end{array} \right) = \left( \begin{array}{c} j_1 \\ k-1-j_1 \\ j_1 \\ k-1-j_1 \end{array} \right) , \\ \left. \begin{split} j_1 + j_2 +1&=k \\ |j_2 - j_3| &=k \\ |j_3 - j_4| &=k \\ j_4 + j_1 +1&=k \end{split} \right\} &\Longleftrightarrow \, \left( \begin{array}{c} j_1 \\ j_2 \\ j_3 \\ j_4 \end{array} \right) = \left( \begin{array}{c} j_1 \\ k-1-j_1 \\ 2k-1-j_1 \\ k-1-j_1 \end{array} \right) , \\ \left. \begin{split} |j_1 - j_2| &=k \\ j_2 + j_3 +1&=k \\ j_3 + j_4 +1&=k \\ |j_4 - j_1| &=k \end{split} \right\} &\Longleftrightarrow \, \left( \begin{array}{c} j_1 \\ j_2 \\ j_3 \\ j_4 \end{array} \right) =\left( \begin{array}{c} j_1 \\ j_1-k \\ 2k-1-j_1 \\ j_1-k \end{array} \right) . \end{split} \]
Elementary calculations lead to the desired formula for $\Pi (k,k,k,k)$.

Assume next that $k \neq \ell$. The cases
\[ \left. \begin{split} j_1+j_2+1&=k \\ |j_2-j_3|&=k \\ |j_3-j_4|&=\ell \\ j_4+j_1+1&=\ell \end{split} \right\} \qquad \left. \begin{split} |j_1 - j_2|&=k \\ j_2 + j_3 +1&=k \\ j_3 + j_4 +1&=\ell \\ | j_4 - j_1|&=\ell \end{split} \right\} \]
are impossible by the assumption $k \neq \ell$. The remaining four cases are
\[ \begin{split} \left. \begin{split} |j_1 - j_2|&=k \\ |j_2-j_3|&=k \\ |j_3-j_4|&=\ell \\ |j_4 - j_1|&=\ell \end{split} \right\} &\Longleftrightarrow \, \left( \begin{array}{c} j_1 \\ j_2 \\ j_3 \\ j_4 \end{array} \right) \in \left\{ \left( \begin{array}{c} j_1 \\ j_1 + k \\ j_1 \\ j_1+\ell \end{array} \right), \left( \begin{array}{c} j_1 \\ j_1 - k \\ j_1 \\ j_1+\ell \end{array} \right), \left( \begin{array}{c} j_1 \\ j_1 + k \\ j_1 \\ j_1-\ell \end{array} \right), \left( \begin{array}{c} j_1 \\ j_1 - k \\ j_1 \\ j_1-\ell \end{array} \right) \right\} , \\ \left. \begin{split} |j_1 - j_2|&=k \\ |j_2-j_3|&=k \\ j_3 + j_4 +1&=\ell \\ j_4 + j_1 +1&=\ell \end{split} \right\} &\Longleftrightarrow \, \left( \begin{array}{c} j_1 \\ j_2 \\ j_3 \\ j_4 \end{array} \right) \in \left\{ \left( \begin{array}{c} j_1 \\ j_1+k \\ j_1 \\ \ell-1-j_1 \end{array} \right), \left( \begin{array}{c} j_1 \\ j_1-k \\ j_1 \\ \ell-1-j_1 \end{array} \right) \right\} , \\ \left. \begin{split} j_1 + j_2 +1&=k \\ j_2 + j_3 +1&=k \\ |j_3-j_4|&=\ell \\ |j_4 - j_1|&=\ell \end{split} \right\} &\Longleftrightarrow \, \left( \begin{array}{c} j_1 \\ j_2 \\ j_3 \\ j_4 \end{array} \right) \in \left\{ \left( \begin{array}{c} j_1 \\ k-1-j_1 \\ j_1 \\ j_1+\ell \end{array} \right) , \left( \begin{array}{c} j_1 \\ k-1-j_1 \\ j_1 \\ j_1 - \ell \end{array} \right) \right\} , \\ \left. \begin{split} j_1 + j_2 +1&=k \\ j_2 + j_3 +1&=k \\ j_3 + j_4 +1&=\ell \\ j_4 + j_1 +1&=\ell \end{split} \right\} &\Longleftrightarrow \, \left( \begin{array}{c} j_1 \\ j_2 \\ j_3 \\ j_4 \end{array} \right) =\left( \begin{array}{c} j_1 \\ k-1-j_1 \\ j_1 \\ \ell -1- j_1 \end{array} \right) . \end{split} \]
Elementary calculations lead to the desired formula for $\Pi (k,k,\ell,\ell)$, $k \neq \ell$.

We finally deduce
\[ \begin{split} \Pi (k,\ell,k,\ell) = \sum_{j_1, j_2, j_3, j_4=0}^{N-1} &\frac{1}{16} \left( \mathds{1}_{\{ |j_1-j_2|=k \}} - \mathds{1}_{\{ j_1+j_2+1=k \}} \right) \left( \mathds{1}_{\{ |j_2-j_3|=\ell \}} - \mathds{1}_{\{ j_2+j_3+1=\ell \}} \right) \\ &\times \left( \mathds{1}_{\{ |j_3-j_4|=k \}} - \mathds{1}_{\{ j_3+j_4+1=k \}} \right) \left( \mathds{1}_{\{ |j_4-j_1|=\ell \}} - \mathds{1}_{\{ j_4+j_1+1=\ell \}} \right) . \end{split} \]
Assume that $k \neq \ell$. Expanding the quadruple product gives 16 terms. Every case containing an odd number of equations of the form $j+j'+1=a$ is impossible for parity reasons. The case
\[ \left. \begin{split} j_1 + j_2 +1&=k \\ j_2 + j_3 +1&=\ell \\ j_3 + j_4 +1&=k \\ j_4 + j_1 +1&=\ell \end{split} \right\} \]
is impossible by the assumption $k \neq \ell$. The remaining seven cases are
\[ \begin{split} \left. \begin{split} |j_1 - j_2|&=k \\ |j_2-j_3|&=\ell \\ |j_3-j_4|&=k \\ |j_4 - j_1|&=\ell \end{split} \right\} &\Longleftrightarrow \, \left( \begin{array}{c} j_1 \\ j_2 \\ j_3 \\ j_4 \end{array} \right) \in \left\{ \left( \begin{array}{c} j_1 \\ j_1+k \\ j_1+k+\ell \\ j_1+\ell \end{array} \right) , \left( \begin{array}{c} j_1 \\ j_1+k \\ j_1+k-\ell \\ j_1-\ell \end{array} \right) , \left( \begin{array}{c} j_1 \\ j_1-k \\ j_1-k+\ell \\ j_1+\ell \end{array} \right) , \left( \begin{array}{c} j_1 \\ j_1-k \\ j_1-k-\ell \\ j_1-\ell \end{array} \right) \right\} , \\ \left. \begin{split} |j_1 - j_2|&=k \\ |j_2-j_3|&=\ell \\ j_3 + j_4 +1&=k \\ j_4 + j_1 +1&=\ell \end{split} \right\} &\Longleftrightarrow \, \left( \begin{array}{c} j_1 \\ j_2 \\ j_3 \\ j_4 \end{array} \right) = \left( \begin{array}{c} j_1 \\ j_1+k \\ j_1+k-\ell \\ \ell-1-j_1 \end{array} \right) , \\ \left. \begin{split} j_1 + j_2 +1&=k \\ j_2 + j_3 +1&=\ell \\ |j_3-j_4|&=k \\ |j_4 - j_1|&=\ell \end{split} \right\} &\Longleftrightarrow \, \left( \begin{array}{c} j_1 \\ j_2 \\ j_3 \\ j_4 \end{array} \right) = \left( \begin{array}{c} j_1 \\ k-1-j_1 \\ j_1+\ell -k \\ j_1 + \ell \end{array} \right) , \\ \left. \begin{split} j_1 + j_2 +1&=k \\ |j_2 - j_3|&=\ell \\ |j_3 - j_4| &=k \\ j_4 + j_1 +1&=\ell \end{split} \right\} &\Longleftrightarrow \, \left( \begin{array}{c} j_1 \\ j_2 \\ j_3 \\ j_4 \end{array} \right) =\left( \begin{array}{c} j_1 \\ k-1-j_1 \\ k+\ell -1-j_1 \\ \ell -1-j_1 \end{array} \right) , \\ \left. \begin{split} |j_1 - j_2| &=k \\ j_2 + j_3+1&=\ell \\ j_3 + j_4+1 &=k \\ |j_4 - j_1|&=\ell \end{split} \right\} &\Longleftrightarrow \, \left( \begin{array}{c} j_1 \\ j_2 \\ j_3 \\ j_4 \end{array} \right) = \left( \begin{array}{c} j_1 \\ j_1-k \\ k+\ell -1-j_1 \\ j_1 - \ell \end{array} \right) , \\ \left. \begin{split} j_1 + j_2+1 &=k \\ |j_2 - j_3|&=\ell \\ j_3 + j_4+1 &=k \\ |j_4 - j_1|&=\ell \end{split} \right\} &\Longleftrightarrow \, \left( \begin{array}{c} j_1 \\ j_2 \\ j_3 \\ j_4 \end{array} \right) \in \left\{ \left( \begin{array}{c} j_1 \\ k-1-j_1 \\ k+\ell -1-j_1 \\ j_1 - \ell \end{array} \right) , \left( \begin{array}{c} j_1 \\ k-1-j_1 \\ k-\ell -1-j_1 \\ j_1 + \ell \end{array} \right) \right\} , \\ \left. \begin{split} |j_1 - j_2| &=k \\ j_2 + j_3+1&=\ell \\| j_3 - j_4| &=k \\ j_4 + j_1 +1&=\ell \end{split} \right\} &\Longleftrightarrow \, \left( \begin{array}{c} j_1 \\ j_2 \\ j_3 \\ j_4 \end{array} \right) \in \left\{ \left( \begin{array}{c} j_1 \\ j_1 -k \\ k+\ell -1-j_1 \\ \ell - 1 - j_1 \end{array} \right) , \left( \begin{array}{c} j_1 \\ j_1 +k \\ \ell -k -1-j_1 \\ \ell -1-j_1 \end{array} \right) \right\} . \end{split} \]
Elementary calculations lead to the desired formula for $\Pi (k,\ell,k,\ell)$, $k \neq \ell$. This finishes the proof of Lemma \ref{gammalemmaSO2N+1}.
\end{proof}

The main formula \eqref{determinantal} for determinantal point processes and Lemma \ref{gammalemmaSO2N+1} yield that for any integer $k \ge 1$,
\[ \mathbb{E} \sum_{n=1}^N \cos (2 k \theta_n) = \frac{1}{\pi} \int_0^{\pi} K(x,x) \cos (2kx) \, \mathrm{d}x = \Pi (2k) = 0, \]
and
\[ \begin{split} \mathbb{E} \sum_{\substack{n_1, n_2=1 \\ n_1 \neq n_2}}^N \cos (k \theta_{n_1}) \cos (k \theta_{n_2}) &=\frac{1}{\pi^2} \int_0^{\pi} \!\! \int_0^{\pi} (K(x,x) K(y,y) - K(x,y)^2) \cos (kx) \cos (ky) \, \mathrm{d}x \, \mathrm{d}y \\ &= \Pi (k)^2 - \Pi (k,k) = \frac{\varepsilon (k) - (2N-k)^+}{4} . \end{split} \]
Using \eqref{wassersteinothergroupsexpand}, the previous two formulas and the fact $2N - (2N-k)^+ = \min \{ k,2N \}$ immediately show that
\[ \mathbb{E} \, W_2^2 (\mu_A, \lambda_{\mathbb{T}}) = \frac{2}{(2N)^2} \sum_{k=1}^{\infty} \frac{\min \{ k, 2N \} + \varepsilon (k)}{k^2}, \]
as claimed.

Squaring \eqref{wassersteinothergroupsexpand} and taking the expected value gives
\begin{equation}\label{secondmomentSO2N+1}
\begin{split} \mathbb{E} \, W_2^4 (\mu_A, \lambda_{\mathbb{T}}) = \frac{4}{N^4} \sum_{k, \ell =1}^{\infty} \frac{1}{k^2 \ell^2} \Bigg( &\frac{N^2}{4} + \frac{N}{2} \cdot \frac{\varepsilon (k) -(2N-k)^+}{4} +\frac{N}{2} \cdot \frac{\varepsilon (\ell) -(2N-\ell)^+}{4} \\ &+E(k,\ell) + F(k,\ell) + F(\ell, k)+ H(k,\ell) \Bigg) , \end{split}
\end{equation}
where
\[ \begin{split} E(k,\ell) &= \mathbb{E} \, \frac{1}{4} \sum_{n,m=1}^N \cos (2 k \theta_n) \cos (2 \ell \theta_m) , \\ F(k,\ell) &= \mathbb{E} \, \frac{1}{2} \sum_{\substack{n,m_1, m_2=1 \\ m_1 \neq m_2}}^N \cos (2 k \theta_n) \cos (\ell \theta_{m_1}) \cos (\ell \theta_{m_2}) , \\ H(k,\ell) &= \mathbb{E} \sum_{\substack{n_1, n_2, m_1, m_2 =1 \\ n_1 \neq n_2, \,\, m_1 \neq m_2}}^N \cos (k \theta_{n_1}) \cos (k \theta_{n_2}) \cos (\ell \theta_{m_1}) \cos (\ell \theta_{m_2}) .  \end{split} \]
Fix integers $k, \ell \ge 1$. We will use the main formula \eqref{determinantal} for determinantal point processes and Lemma \ref{gammalemmaSO2N+1} to compute $E(k,\ell)$, $F(k,\ell)$ and $H(k,\ell)$.

To compute $E(k,\ell)$, we consider the contribution of the terms $n=m$,
\[ \begin{split} \mathbb{E} \, \frac{1}{4} \sum_{n=1}^N \cos (2 k \theta_n) \cos (2 \ell \theta_n) &= \frac{1}{4 \pi} \int_0^{\pi} K(x,x) \frac{\cos ((2k-2\ell) x) + \cos ((2k+2\ell)x)}{2} \, \mathrm{d}x \\ &= \frac{\Pi (2k-2\ell)}{8} + \frac{\Pi (2k+2\ell)}{8} = \mathds{1}_{\{ k = \ell \}} \frac{N}{8} , \end{split} \]
and the terms $n \neq m$,
\[ \begin{split} \mathbb{E} \, \frac{1}{4} \sum_{\substack{n,m=1 \\ n \neq m}}^N \cos (2 k \theta_n) \cos (2 \ell \theta_m) &= \frac{1}{4 \pi^2} \int_0^{\pi} \!\! \int_0^{\pi} \left( K(x,x) K(y,y) - K(x,y)^2 \right) \cos (2kx) \cos (2\ell y) \, \mathrm{d}x \, \mathrm{d}y \\ &= \frac{\Pi (2k) \Pi (2\ell)}{4} - \frac{\Pi (2k, 2\ell)}{4}  = - \mathds{1}_{\{ k=\ell \}} \frac{(N-k)^+}{8} . \end{split} \]
Thus
\begin{equation}\label{Ekell}
E(k,\ell) = \mathds{1}_{\{ k = \ell \}} \frac{\min \{ k, N \}}{8} .
\end{equation}

Consider now $F(k,\ell)$. The terms with $n=m_1$ contribute
\[ \begin{split} \mathbb{E} \, \frac{1}{2} &\sum_{\substack{n,m_1, m_2=1 \\ n=m_1 \neq m_2}}^N \cos (2 k \theta_n) \cos (\ell \theta_{m_1}) \cos (\ell \theta_{m_2}) \\ &= \frac{1}{2 \pi^2} \int_0^{\pi} \!\! \int_0^{\pi} (K(x,x) K(y,y) - K(x,y)^2) \cos (2kx) \cos (\ell x) \cos (\ell y) \, \mathrm{d}x \, \mathrm{d}y \\ &= \frac{\Pi (\ell) \Pi (2k-\ell) + \Pi (\ell) \Pi (2k+\ell)}{4} - \frac{\Pi (2k-\ell, \ell) + \Pi (2k+\ell, \ell)}{4}  \\ &= \frac{\varepsilon (\ell) \varepsilon (|2k-\ell|)+\varepsilon (2k+\ell)}{16} - \mathds{1}_{\{ k = \ell \}} \frac{(2N-k)^+}{16},  \end{split} \]
and the contribution of the terms with $n=m_2$ is the same. The terms with $n \neq m_1, m_2$ contribute
\[ \begin{split} \mathbb{E} \, \frac{1}{2} \sum_{\substack{n,m_1, m_2=1 \\ \textrm{pairwise distinct}}}^N &\cos (2 k \theta_n) \cos (\ell \theta_{m_1}) \cos (\ell \theta_{m_2}) \\ &= \frac{1}{2 \pi^3} \int_0^{\pi} \!\! \int_0^{\pi} \!\! \int_0^{\pi} \det \left( K(x_i, x_j) \right)_{i,j=1}^3 \cos (2k x_1) \cos (\ell x_2) \cos (\ell x_3) \, \mathrm{d}x_1 \mathrm{d} x_2 \mathrm{d} x_3 . \end{split} \]
The $3 \times 3$ determinant in the previous formula has $6$ terms when expanded. Both terms that include $K(x_1, x_1)$ has zero contribution, as $\pi^{-1} \int_0^{\pi} K(x_1, x_1) \cos (2kx_1) \, \mathrm{d} x_1 =\Pi (2k) = 0$. The other two terms corresponding to an odd permutation each contribute
\[ \frac{1}{2\pi^3} \int_0^{\pi} \!\! \int_0^{\pi} \!\! \int_0^{\pi} (-K(x,y)^2 K(z,z)) \cos (2kx) \cos (\ell y) \cos (\ell z) \, \mathrm{d}x \, \mathrm{d}y \, \mathrm{d}z = - \frac{\Pi (\ell) \Pi (2k, \ell)}{2}= - \frac{\varepsilon (2k+\ell)}{8} . \]
The remaining two terms in the determinant each contribute
\begin{multline*}
\frac{1}{2 \pi^3} \int_0^{\pi} \!\! \int_0^{\pi} \!\! \int_0^{\pi} K(x,y) K(y,z) K(z,x) \cos (2kx) \cos (\ell y) \cos (\ell z) \, \mathrm{d}x \, \mathrm{d}y \, \mathrm{d}z \\ = \frac{\Pi (2k, \ell, \ell)}{2} = \mathds{1}_{\{ k = \ell \}} \frac{(N-k)^+}{8} .
\end{multline*}
Therefore
\begin{equation}\label{Fkell}
F(k,\ell) = \frac{\varepsilon (\ell) \varepsilon (|2k-\ell|) - \varepsilon (2k+\ell)}{8} - \mathds{1}_{\{ k = \ell \}} \frac{(2N-k)^+ - 2(N-k)^+}{8} .
\end{equation}

To compute $H(k,\ell)$, we decompose the sum according to the cardinality of the index set $\{ n_1, n_2, m_1, m_2 \}$.\\

\noindent\textbf{Case 1.} Consider the terms in $H(k,\ell)$ for which $\{ n_1, n_2, m_1, m_2 \}$ has cardinality $4$:
\[ \begin{split}
S_4 &\coloneqq \mathbb{E} \sum_{\substack{n_1, n_2, m_1, m_2 =1 \\ \textrm{pairwise distinct}}}^N \cos (k \theta_{n_1}) \cos (k \theta_{n_2}) \cos (\ell \theta_{m_1}) \cos (\ell \theta_{m_2}) \\ &= \frac{1}{\pi^4} \int_0^{\pi} \!\! \int_0^{\pi} \!\! \int_0^{\pi} \!\! \int_0^{\pi} \det \left( K(x_i, x_j) \right)_{i,j=1}^4 \cos (k x_1) \cos (k x_2) \cos (\ell x_3) \cos (\ell x_4) \, \mathrm{d}x_1 \mathrm{d}x_2 \mathrm{d}x_3 \mathrm{d}x_4 . \end{split} \]
The $4 \times 4$ determinant in the previous formula has 24 terms when expanded. Considering each of these terms separately leads to
\[ \begin{split} S_4 &= -4 \Pi (k,k,\ell,\ell) - 2\Pi (k, \ell, k, \ell) + 4 \Pi (k) \Pi (k, \ell, \ell) + 4 \Pi (\ell) \Pi (\ell, k, k) + \Pi (k,k) \Pi (\ell, \ell) \\ &\phantom{={}} + 2\Pi (k,\ell)^2 - \Pi (k)^2 \Pi (\ell, \ell) - \Pi (\ell)^2 \Pi (k,k) - 4 \Pi (k) \Pi (\ell) \Pi (k,\ell) + \Pi (k)^2 \Pi (\ell)^2 . \end{split} \]
By Lemma \ref{gammalemmaSO2N+1}, here
\[ \Pi (k,k) \Pi (\ell, \ell) - \Pi (k)^2 \Pi (\ell, \ell) - \Pi (\ell)^2 \Pi (k,k) + \Pi (k)^2 \Pi (\ell)^2 = \frac{(\varepsilon (k) - (2N-k)^+)(\varepsilon (\ell) - (2N-\ell)^+)}{16} . \]
Hence
\[ S_4 = \frac{(\varepsilon (k) - (2N-k)^+)(\varepsilon (\ell) - (2N-\ell)^+)}{16} + \mathds{1}_{\{ k=\ell \}} S_4' + \mathds{1}_{\{ k \neq \ell \}} S_4'', \]
where
\begin{equation}\label{S4'SO2N+1}
S_4' = -\frac{3(N-k)^+}{2} - \frac{3(2N-k)^+}{8} + \frac{3 \varepsilon (3k) + \varepsilon (k)}{2} + \frac{((2N-k)^+)^2}{8} - \frac{\varepsilon (k) (2N-k)^+}{4}
\end{equation}
and
\begin{equation}\label{S4''SO2N+1}
\begin{split} S_4'' &= - \frac{(N-\max \{ k,\ell \})^+}{2} - (N-k-\ell)^+ - \frac{\alpha (\min \{ k, \ell \}, \max \{ k, \ell \}, \max \{ k, \ell \})}{4} \\ &\phantom{={}} - \frac{\alpha (k+\ell, \ell, k+\ell) + \alpha (k+\ell, k, k+\ell) + \alpha (\ell, \ell-k,\ell) + \alpha (k, k-\ell, k)}{4} \\ &\phantom{={}} - \frac{\alpha (k+\ell, \min \{ k, \ell \}, k+\ell)}{2} - \frac{\alpha (k,k,k+\ell) + \alpha (\ell, \ell, \ell+k)}{4} \\ &\phantom{={}} + \frac{3 \varepsilon (2\ell+k) + \varepsilon (k) \varepsilon (|2\ell -k|)}{4} + \frac{3 \varepsilon (2k+\ell) + \varepsilon (\ell) \varepsilon (|2k -\ell|)}{4} + \frac{\varepsilon (k+\ell)}{2} . \end{split}
\end{equation}

\noindent\textbf{Case 2.} Consider the terms in $H(k,\ell)$ for which $\{ n_1, n_2, m_1, m_2 \}$ has cardinality $3$. Since $n_1 \neq n_2$ and $m_1 \neq m_2$, there are four possibilities: $n_1=m_1$, $n_1=m_2$, $n_2=m_1$ or $n_2=m_2$. All four cases have the same contribution, hence
\[ \begin{split} S_3&\coloneqq \mathbb{E} \sum_{\substack{n_1, n_2, m_1, m_2=1 \\ n_1 \neq n_2, \,\, m_1 \neq m_2 \\ |\{ n_1, n_2, m_1, m_2 \}|=3}}^N \cos (k \theta_{n_1}) \cos (k \theta_{n_2}) \cos (\ell \theta_{m_1}) \cos (\ell \theta_{m_2}) \\ &= \frac{4}{\pi^3} \int_0^{\pi} \!\! \int_0^{\pi} \!\! \int_0^{\pi} \det \left( K(x_i, x_j) \right)_{i,j=1}^3 \frac{\cos ((k-\ell) x_1) + \cos ((k+\ell)x_1)}{2} \cos (kx_2) \cos (\ell x_3) \, \mathrm{d}x_1 \mathrm{d}x_2 \mathrm{d}x_3 . \end{split} \]
The $3 \times 3$ determinant in the previous formula has $6$ terms when expanded. Considering each of these terms separately leads to
\[ \begin{split} S_3 &= 2 \Pi (k-\ell) \Pi (k) \Pi (\ell) +2\Pi (k+\ell) \Pi (k) \Pi (\ell) + 4 \Pi (k-\ell, k, \ell) + 4 \Pi (k+\ell, k, \ell) - 2 \Pi (k-\ell) \Pi (k,\ell) \\ &\phantom{={}} - 2\Pi (k+\ell) \Pi(k, \ell) - 2\Pi (\ell) \Pi (k-\ell, k) - 2 \Pi (\ell) \Pi (k+\ell, k) - 2\Pi (k) \Pi (k-\ell, \ell) - 2\Pi (k) \Pi (k+\ell, \ell) . \end{split} \]
By Lemma \ref{gammalemmaSO2N+1}, we can write $S_3 = \mathds{1}_{\{ k=\ell \}} S_3' + \mathds{1}_{\{ k \neq \ell \}} S_3''$, where
\begin{equation}\label{S3'SO2N+1}
S_3' = \frac{\varepsilon (k) N}{2} + (2N-k)^+ + (N-k)^+ - \frac{N (2N-k)^+}{2} - \varepsilon (k) - \varepsilon (3k)
\end{equation}
and
\begin{equation}\label{S3''SO2N+1}
\begin{split} S_3'' &= (N-\max \{ k,\ell \})^+ + \frac{\alpha (\min \{ k,\ell \}, \max \{ k,\ell \}, \max \{ k,\ell \})}{2} +\frac{\alpha (\ell, \ell-k,\ell) + \alpha (k, k-\ell, k)}{2} \\ &\phantom{={}} +(N-k-\ell)^+ + \frac{\alpha (k+\ell, \ell, k+\ell) + \alpha (k+\ell, k, k+\ell)}{2} \\ &\phantom{={}} - \varepsilon (k+\ell) - \frac{\varepsilon (2 \max \{ k,\ell \} - \min \{ k,\ell \}) + \varepsilon (\max \{ k,\ell \})}{2} - \frac{\varepsilon (2k+\ell) + \varepsilon (2\ell +k)}{2} . \end{split}
\end{equation}

\noindent\textbf{Case 3.} Consider the terms in $H(k,\ell)$ for which $\{ n_1, n_2, m_1, m_2 \}$ has cardinality $2$. Since $n_1 \neq n_2$ and $m_1 \neq m_2$, there are two possibilities: either $n_1=m_1$, $n_2=m_2$, or $n_1=m_2$, $n_2=m_1$. Both cases have the same contribution, hence
\[ \begin{split} S_2 &\coloneqq  \mathbb{E} \sum_{\substack{n_1, n_2, m_1, m_2=1 \\ n_1 \neq n_2, \,\, m_1 \neq m_2 \\ |\{ n_1, n_2, m_1, m_2 \}|=2}}^N \cos (k \theta_{n_1}) \cos (k \theta_{n_2}) \cos (\ell \theta_{m_1}) \cos (\ell \theta_{m_2}) \\ &= \frac{2}{\pi^2} \int_0^{\pi} \!\! \int_0^{\pi} \left( K(x,x) K(y,y) - K(x,y)^2 \right) \cos (kx) \cos (ky) \cos (\ell x) \cos (\ell y) \, \mathrm{d}x \, \mathrm{d} y \\ &= \frac{\Pi (k-\ell)^2 - \Pi (k-\ell, k-\ell)}{2} + \Pi (k-\ell) \Pi (k+\ell) - \Pi (k-\ell, k+\ell) + \frac{\Pi (k+\ell)^2 - \Pi (k+\ell, k+\ell)}{2} . \end{split} \]
By Lemma \ref{gammalemmaSO2N+1}, we can write $S_2 = \mathds{1}_{\{ k=\ell \}} S_2' + \mathds{1}_{\{ k \neq \ell \}} S_2''$, where
\begin{equation}\label{S2'SO2N+1}
S_2' = \frac{N^2-N}{2} - \frac{(N-k)^+}{4}
\end{equation}
and
\begin{equation}\label{S2''SO2N+1}
S_2'' = - \frac{(2N-|k-\ell|)^+}{8} - \frac{(2N-k-\ell)^+}{8} + \frac{\varepsilon (|k-\ell|)}{8} + \frac{3\varepsilon (k+\ell)}{8} .
\end{equation}
Therefore
\[ H(k,\ell) =  \frac{(\varepsilon (k) - (2N-k)^+)(\varepsilon (\ell) - (2N-\ell)^+)}{16} + \mathds{1}_{\{ k=\ell \}} (S_2' + S_3' +S_4') + \mathds{1}_{\{ k \neq \ell \}} (S_2'' + S_3'' + S_4'') , \]
where $S_2'$, $S_3'$, $S_4'$, $S_2''$, $S_3''$, $S_4''$ are given by \eqref{S4'SO2N+1}--\eqref{S2''SO2N+1}.

Substituting \eqref{Ekell}, \eqref{Fkell} and the previous formula in \eqref{secondmomentSO2N+1} yields
\[ \begin{split} \mathbb{E} \, W_2^4 (\mu_A, \lambda_{\mathbb{T}}) &= \frac{4}{(2N)^4} \sum_{k,\ell=1}^{\infty} \frac{(2N-(2N-k)^+ + \varepsilon (k))(2N - (2N-\ell)^+ + \varepsilon (\ell))}{k^2 \ell^2} \\ &\phantom{={}}+\frac{4}{(2N)^4} \sum_{k=1}^{\infty} \frac{T(k)}{k^4} + \frac{4}{(2N)^4} \sum_{\substack{k,\ell =1 \\ k \neq \ell}}^{\infty} \frac{V(k,\ell) + \delta(k,\ell)}{k^2 \ell^2} , \end{split} \]
and as the first line is $(\mathbb{E} \, W_2^2 (\mu_A, \lambda_{\mathbb{T}}))^2$, we obtain
\[ \mathrm{Var} \, W_2^2 (\mu_A, \lambda_{\mathbb{T}}) = \frac{4}{(2N)^4} \sum_{k=1}^{\infty} \frac{T(k)}{k^4} + \frac{4}{(2N)^4} \sum_{\substack{k,\ell =1 \\ k \neq \ell}}^{\infty} \frac{V(k,\ell) + \delta (k,\ell)}{k^2 \ell^2} . \]
Here
\[ \begin{split} T(k) &= 16 \left( E(k,k) + 2 F(k,k) + S_2' + S_3' + S_4' \right) , \\ V(k,\ell) + \delta (k,\ell) &= 16 (F(k,\ell) + F(\ell,k) + S_2'' + S_3'' + S_4'') \end{split} \]
simplify to the expressions given in Theorem \ref{exacttheorem} (ii). This finishes the proof of Theorem \ref{exacttheorem} for $G= \mathrm{SO} (2N+1)$ and $\mathrm{O}(2N+1)$.
\end{proof}

\subsection{Proof for $\mathrm{SO}(2N)$}

\begin{proof}[Proof of Theorem \ref{exacttheorem} (iii)] We rely on the exact formula \eqref{wassersteinothergroupsexpand} for the Wasserstein metric. The points $\theta_n \,\, (1 \le n \le N)$ form a determinantal point process on $[0,\pi]$ with kernel function $K(x,y) =1+2\sum_{j=1}^{N-1} \cos (jx) \cos (jy)$ with respect to the normalized Lebesgue measure on $[0,\pi]$. Let $\Pi (a)$, $\Pi (a,b)$, $\Pi (a,b,c)$ and $\Pi (a,b,c,d)$ be as in \eqref{gammaabcd}, and define $\alpha (a,b,c) = (\min \{ a,N \} + \min \{ b,N \} -c-1)^+$ and $\varepsilon (a) = \mathds{1}_{\{ 1 \le a \le 2N-2, \,\, a \textrm{ even} \}}$.

Following the steps in the proof of Lemma \ref{gammalemmaSO2N+1}, we deduce the following. We have $\Pi (0)=\Pi (0,0)=N$. Let $k,\ell \ge 1$ be integers. We have
\[ \Pi (k) = \frac{\varepsilon (k)}{2}, \qquad \Pi (k,\ell) = \left\{ \begin{array}{ll} \frac{(2N-k-1)^+}{4} + \frac{1}{2} \mathds{1}_{\{ k \le N-1 \}} & \textrm{if } k=\ell, \\ \frac{\varepsilon (k+\ell)}{2} & \textrm{if } k \neq \ell , \end{array} \right. \qquad \Pi (0,k) = \frac{\varepsilon (k)}{2}, \]
and
\[ \begin{split} \Pi (k,\ell, \ell) &= \left\{ \begin{array}{ll} \frac{(2N-k)^+}{8} + \frac{3}{8} \mathds{1}_{\{ k \le N-1 \}} & \textrm{if } k=2\ell, \\ \frac{3 \varepsilon (2\ell +k) + \varepsilon (k) \varepsilon (|2\ell -k|)}{8} & \textrm{if } k \neq 2 \ell, \end{array} \right. \\ \Pi (k,\ell, k+\ell) &= \frac{(N-k-\ell -1)^+}{4} + \frac{\alpha (k+\ell, \ell, k+\ell) + \alpha (k+\ell, k, k+\ell)}{8} \\ &\phantom{={}} + \frac{5}{8} \mathds{1}_{\{ k+\ell \le N-1 \}} + \frac{1}{4} \mathds{1}_{\{ \max \{ k,\ell \} \le N-1 \}}, \end{split} \]
as well as $\Pi (0,k,k)=\frac{(2N-k-1)^+}{4} + \frac{1}{2} \mathds{1}_{\{ k \le N-1 \}}$ and
\[ \Pi (k,k,k,k)= \frac{(2N-k-1)^+}{16} + \frac{(N-k)^+}{4} + \frac{1}{8} \mathds{1}_{\{ k \le N-1 \}} + \frac{1}{4} \mathds{1}_{\{ k \le (N-1)/2 \}} . \]
If $k \neq \ell$, then
\[ \begin{split} \Pi (k,k,\ell,\ell) &=\frac{(N-\max \{ k,\ell \} -1)^+ + (N-k-\ell -1)^+}{8} + \frac{\alpha (\min \{ k,\ell \}, \max \{ k,\ell \}, \max \{ k,\ell \})}{16} \\ &\phantom{={}}+ \frac{\alpha (k+\ell, \ell, k+\ell) + \alpha (k+\ell, k, k+\ell) + \alpha (\ell, \ell-k, \ell) + \alpha (k, k-\ell, k)}{16} +\frac{3}{8} \mathds{1}_{\{ k+\ell \le N-1 \}} \\ &\phantom{={}} + \frac{1}{4} \mathds{1}_{\{ \max \{ k,\ell \} \le N-1 \}} + \frac{1}{8} \mathds{1}_{\{ k \le N-1 \}} \mathds{1}_{\{ |k-\ell| \le N-1 \}} + \frac{1}{8} \mathds{1}_{\{ \ell \le N-1 \}} \mathds{1}_{\{ |k-\ell| \le N-1 \}} , \\ \Pi (k,\ell,k,\ell) &=\frac{(N-k-\ell -1)^+}{4} + \frac{\alpha (k+\ell, \min \{ k,\ell \}, k+\ell)}{4} + \frac{\alpha (k,k,k+\ell) + \alpha (\ell, \ell, k+\ell)}{8} \\ &\phantom{={}} +\frac{1}{2} \mathds{1}_{\{ k+\ell \le N-1 \}} + \frac{1}{2} \mathds{1}_{\{ \max \{ k, \ell \} \le N-1 \}} . \end{split} \]

The rest of the proof is identical to Theorem \ref{exacttheorem} (ii). This finishes the proof of Theorem \ref{exacttheorem} for $G=\mathrm{SO}(2N)$.
\end{proof}

\subsection{Proof for $\mathrm{O}^- (2N+2)$ and $\mathrm{USp}(2N)$}

\begin{proof}[Proof of Theorem \ref{exacttheorem} (iv)] We rely on the exact formula \eqref{wassersteinothergroupsexpand} for the Wasserstein metric. The points $\theta_n \,\, (1 \le n \le N)$ form a determinantal point process on $[0,\pi]$ with kernel function $K(x,y) = 2\sum_{j=1}^N \sin (jx) \sin (jy)$ with respect to the normalized Lebesgue measure on $[0,\pi]$. Let $\Pi (a)$, $\Pi (a,b)$, $\Pi (a,b,c)$ and $\Pi (a,b,c,d)$ be as in \eqref{gammaabcd}, and define $\alpha (a,b,c) = (\min \{ a-1, N \} + \min \{ b-1, N \} - c+1)^+$ and $\varepsilon (a) = \mathds{1}_{\{ 1 \le a \le 2N, \,\, a \textrm{ even} \}}$.

Following the steps in the proof of Lemma \ref{gammalemmaSO2N+1}, we deduce the following. We have $\Pi (0)=\Pi (0,0)=N$. Let $k,\ell \ge 1$ be integers. We have
\[ \Pi (k) = - \frac{\varepsilon (k)}{2}, \qquad \Pi (k,\ell) = \left\{ \begin{array}{ll} \frac{(2N-k+1)^+}{4} - \frac{1}{2}\mathds{1}_{\{ k \le N \}} & \textrm{if } k=\ell, \\ - \frac{\varepsilon (k+\ell)}{2} & \textrm{if } k \neq \ell , \end{array} \right. \qquad \Pi (0,k) = - \frac{\varepsilon (k)}{2}, \]
and
\[ \begin{split} \Pi (k,\ell, \ell) &= \left\{ \begin{array}{ll} \frac{(2N-k)^+}{8} - \frac{3}{8}  \mathds{1}_{\{ k \le N \}} & \textrm{if } k=2\ell, \\ - \frac{3 \varepsilon (2\ell +k) + \varepsilon (k) \varepsilon (|2\ell -k|)}{8} & \textrm{if } k \neq 2 \ell, \end{array} \right. \\ \Pi (k,\ell, k+\ell) &= \frac{(N-k-\ell)^+}{4} + \frac{\alpha (k+\ell, \ell, k+\ell) + \alpha (k+\ell, k, k+\ell)}{8} - \frac{1}{8} \mathds{1}_{\{ k + \ell \le N \}}, \end{split} \]
as well as $\Pi (0,k,k)=\frac{(2N-k+1)^+ }{4} - \frac{1}{2}\mathds{1}_{\{ k \le N \}}$ and
\[ \Pi (k,k,k,k)= \frac{(2N-k+1)^+}{16} + \frac{(N-k)^+}{4} -\frac{1}{8} \mathds{1}_{\{ k \le N \}} - \frac{1}{4} \mathds{1}_{\{ k \le N/2 \}} . \]
If $k \neq \ell$, then
\[ \begin{split} \Pi (k,k,\ell,\ell) &= \frac{(N-\max \{ k,\ell \})^+ + (N-k-\ell)^+}{8} + \frac{\alpha (\min \{ k,\ell \}, \max \{ k,\ell \}, \max \{ k,\ell \})}{16} \\ &\phantom{={}} + \frac{\alpha (k+\ell, \ell, k+\ell) + \alpha (k+\ell, k, k+\ell) + \alpha (\ell, \ell -k, \ell) + \alpha (k, k-\ell, k)}{16} - \frac{1}{8} \mathds{1}_{\{ k+\ell \le N \}} , \\ \Pi (k,\ell,k,\ell) &=\frac{(N-k-\ell)^+}{4} + \frac{\alpha (k+\ell, \min \{ k,\ell \}, k+\ell)}{4} + \frac{\alpha (k,k,k+\ell) + \alpha (\ell, \ell, k+\ell)}{8} . \end{split} \]

The rest of the proof is identical to Theorem \ref{exacttheorem} (ii). This finishes the proof of Theorem \ref{exacttheorem} for $G=\mathrm{O}^-(2N+2)$ and $\mathrm{USp}(2N)$.
\end{proof}

\section{Asymptotic formulas for the expected value and the variance}

In this section, we deduce Theorem \ref{maintheorem} from Theorem \ref{exacttheorem}.

\begin{proof}[Proof of Theorem \ref{maintheorem}] We prove the claim by finding the asymptotics as $N \to \infty$ of the exact formulas for $\mathbb{E} \, W_2^2 (\mu_A, \lambda_{\mathbb{T}})$ and $\mathrm{Var} \, W_2^2 (\mu_A, \lambda_{\mathbb{T}})$ given in Theorem \ref{exacttheorem}. As the proofs are similar for all groups, we only give the details for $\mathrm{SO}(2N+1)$ and $\mathrm{O}(2N+1)$.

Let $G=\mathrm{SO} (2N+1)$ or $\mathrm{O}(2N+1)$. The exact formula for the expected value in Theorem \ref{exacttheorem} yields
\[ \mathbb{E} \, W_2^2 (\mu_A, \lambda_{\mathbb{T}}) = \frac{2}{N_0^2} \sum_{k=1}^{\infty} \frac{\min \{ k,N_0 \} + \mathds{1}_{\{ 1 \le k \le N_0 -1, \,\, k \textrm{ odd} \}}}{k^2} = \frac{2}{N_0^2} \left( \sum_{k=1}^{N_0} \frac{1}{k} + \sum_{k=N_0+1}^{\infty} \frac{N_0}{k^2} + \sum_{\substack{k=1 \\ k \textrm{ odd}}}^{N_0-1} \frac{1}{k^2} \right) . \]
Here
\[ \sum_{k=1}^{N_0} \frac{1}{k} = \log N_0 + \gamma + O \left( \frac{1}{N} \right), \quad \sum_{k=N_0+1}^{\infty} \frac{N_0}{k^2} = 1+O \left( \frac{1}{N} \right), \quad \sum_{\substack{k=1 \\ k \textrm{ odd}}}^{N_0-1} \frac{1}{k^2} = \left( 1-\frac{1}{4} \right) \zeta (2) + O \left( \frac{1}{N} \right) . \]
Therefore
\[  \mathbb{E} \, W_2^2 (\mu_A, \lambda_{\mathbb{T}}) = 2 \frac{\log N_0}{N_0^2} + \frac{2 \gamma + 2+ \frac{\pi^2}{4}}{N_0^2} + O \left( \frac{1}{N^3} \right) , \]
as claimed.

Let $\varepsilon (a)$, $\alpha (a,b,c)$, $T(k)$, $V(k, \ell)$ and $\delta (k,\ell)$ be as in Theorem \ref{exacttheorem} (ii). Observe that
\[ \begin{split} \sum_{k=1}^{\infty} \frac{2 \min \{ k^2, (2N)^2 \}}{k^4} &= \sum_{k=1}^{2N} \frac{2}{k^2} + \sum_{k=2N+1}^{\infty} \frac{8N^2}{k^4} = 2 \zeta (2) + O \left( \frac{1}{N} \right) , \\ \sum_{k=1}^{\infty} \frac{4 \varepsilon (k) \min \{ k,2N \}}{k^4} &= \sum_{\substack{k=1 \\ k \textrm{ odd}}}^{2N-1} \frac{4}{k^3} = 4 \left( 1-\frac{1}{8} \right) \zeta (3) + O \left( \frac{1}{N} \right) . \end{split} \]
We have $\min \{ k,N \} - \min \{ k,2N \} =0$ whenever $1 \le k \le N$, and $O(N)$ otherwise. Similarly, $\varepsilon (3k) - \varepsilon (k) =0$ whenever $1 \le k \le 2N/3$, and $O(1)$ otherwise. Hence
\[ \sum_{k=1}^{\infty} \frac{\min \{ k,N \} - \min \{ k,2N \}}{k^4} = O \left( \frac{1}{N^2} \right) \qquad \textrm{and} \qquad \sum_{k=1}^{\infty} \frac{\varepsilon (3k) - \varepsilon (k)}{k^4} = O \left( \frac{1}{N^3} \right) . \]
The previous four formulas yield
\begin{equation}\label{Tkcontribution}
\frac{4}{N_0^4} \sum_{k=1}^{\infty} \frac{T(k)}{k^4} = \frac{\frac{4 \pi^2}{3} + 14 \zeta (3)}{N_0^4} + O \left( \frac{1}{N^5} \right) .
\end{equation}

One readily checks that $\delta (k,\ell)=0$ whenever $k+\ell \le N$, and of course $\delta (k,\ell) = O(1)$ otherwise. Hence
\begin{equation}\label{deltacontribution}
\sum_{\substack{k,\ell =1 \\ k \neq \ell}}^{\infty} \frac{\delta (k,\ell)}{k^2 \ell^2} = O \left( \frac{1}{N} \right) .
\end{equation}
Finally, note that $V(k,\ell)=V(\ell,k)$, so it is enough to consider the terms $k<\ell$. We can check that $V(k,\ell) =0$ whenever $k+\ell \le 2N$ by considering the four regions
\[ k+\ell \le N \qquad \left. \begin{split} \max \{ k,\ell \} &\le N \\ k+\ell &\ge N \end{split} \right\} \qquad \left. \begin{split} 1 &\le k \le N \\ N &\le \ell \le 2N-k \\ \ell &\le N+k \end{split} \right\} \qquad \left. \begin{split} 1 &\le k \le N/2 \\ N+k &\le \ell \le 2N-k \end{split} \right\} \]
separately, see Figure \ref{fig}. We also have $V(k,\ell)=0$ in the region $\ell \ge 2N+k$.

\begin{figure}[h]
\centering
\begin{tikzpicture}[scale=0.7]

\draw [->] (0,0) -- (6,0) node [below right]  {$k$};
\draw [->] (0,0) -- (0,6) node [above left] {$\ell$};
\draw (6,6) -- (0,0) node [below left] {$0$};
\draw (2,6) -- (0,4) node [left] {$2N$};
\draw (6,2) -- (4,0) node [below] {$2N$};
\draw (0,2) -- (1,3);
\draw (0,2) -- (2,0) node [below] {$N$};
\draw (0,4) -- (4,0);
\draw (2,2) -- (0,2) node [left] {$N$};
\draw (2,0) -- (2,2);
\draw (1,3) -- (1,5);
\fill [lightgray,semitransparent] (0,4) -- (2,6) -- (0,6) -- cycle;
\fill [lightgray,semitransparent] (0,0) -- (0,4) -- (4,0) -- cycle;
\fill [lightgray,semitransparent] (4,0) -- (6,0) -- (6,2) -- cycle;
\end{tikzpicture}
\caption{Regions on which $V(k,\ell)$ is estimated. Note that $V(k,\ell)=0$ in the shaded regions.}
\label{fig}
\end{figure}
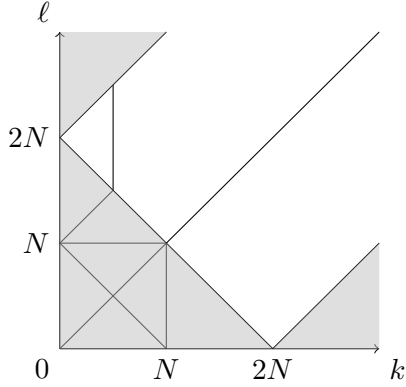

The remaining region is the infinite strip $k+1 \le \ell \le 2N+k$, $k+\ell \ge 2N$. Since $V(k,\ell)=O(N)$, the contribution from $k \ge N/2$ is
\[ \sum_{k \ge N/2} \sum_{\substack{2N-k \le \ell \le 2N+k \\ k+1 \le \ell}} \frac{|V(k,\ell)|}{k^2 \ell^2} \ll \sum_{k \ge N/2} \frac{1}{k^2} = O \left( \frac{1}{N} \right) . \]
In the remaining triangle $1 \le k \le N/2$, $2N-k \le \ell \le 2N+k$ we have $V(k,\ell) = -4N+2\ell -2k + 4 (2N-\ell)^+$, and the contribution is again easily seen to be
\[ \sum_{1 \le k \le N/2} \sum_{2N-k \le \ell \le 2N+k} \frac{|V(k,\ell)|}{k^2 \ell^2} = O \left( \frac{1}{N} \right) . \]
Therefore $\sum_{k,\ell =1, \,\, k \neq \ell}^{\infty} V(k,\ell)/(k^2 \ell^2) = O(1/N)$. Together with \eqref{Tkcontribution} and \eqref{deltacontribution} we finally obtain
\[ \mathrm{Var} \, W_2^2 (\mu_A, \lambda_{\mathbb{T}}) = \frac{4}{N_0^4} \sum_{k=1}^{\infty} \frac{T(k)}{k^4} + \frac{4}{N_0^4} \sum_{\substack{k,\ell =1 \\ k \neq \ell}}^{\infty} \frac{V(k,\ell) + \delta (k,\ell)}{k^2 \ell^2} = \frac{\frac{4 \pi^2}{3} + 14 \zeta (3)}{N_0^4} + O \left( \frac{1}{N^5} \right) , \]
as claimed.
\end{proof}

\section{Limit laws}

Our proof of Theorem \ref{limitlawtheorem} relies on the lengthy computations in Section \ref{exactsection} and the following limit law for the trace of matrix powers.
\begin{thm}[Diaconis--Shahshahani]\label{diaconistheorem} Let $G=\mathrm{U}(N)$, $\mathrm{O}(N)$, $\mathrm{SO}(N)$ or $\mathrm{USp}(2N)$, and let $A \in G$ be a random matrix distributed according to the normalized Haar measure on $G$. For any fixed integer $K \ge 1$,
\[ \left( \mathrm{Tr} (A), \mathrm{Tr} (A^2), \ldots, \mathrm{Tr} (A^K) \right) \overset{d}{\to} (Z_1, Z_2, \ldots, Z_K) \qquad \textrm{as } N \to \infty , \]
where the random variables $Z_k$, $1 \le k \le K$ are defined in terms of i.i.d.\ standard Gaussians $X_k, Y_k$, $1 \le k \le K$ as
\[ Z_k = \left\{ \begin{array}{ll} \sqrt{k/2} (X_k+i Y_k) & \textrm{if } G = \mathrm{U}(N), \\ \sqrt{k} X_k + \mathds{1}_{\{ k \textrm{ even} \}} & \textrm{if } G=\mathrm{O}(N) \textrm{ or } \mathrm{SO}(N), \\ \sqrt{k} X_k - \mathds{1}_{\{ k \textrm{ even} \}} & \textrm{if } G=\mathrm{USp}(2N) . \end{array} \right. \]
\end{thm}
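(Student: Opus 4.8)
The plan is the method of moments. The limiting vector $(Z_1,\dots,Z_K)$ is Gaussian, hence moment determinate, so it suffices to prove that every mixed moment of $(\mathrm{Tr}(A),\dots,\mathrm{Tr}(A^K))$ — including complex conjugates when $G=\mathrm{U}(N)$ — converges to the corresponding moment of $(Z_1,\dots,Z_K)$. Identifying the eigenvalues of $A$ with a multiset of points on $\mathbb{T}$, one has $\mathrm{Tr}(A^k)=p_k$, the $k$-th power sum symmetric function, so the task reduces to evaluating the Haar integrals $\int_G p_\mu\,\overline{p_\nu}\,\mathrm{d}A$ for partitions $\mu=(1^{a_1}\cdots K^{a_K})$, $\nu=(1^{b_1}\cdots K^{b_K})$ (with $\nu=\emptyset$ in the real cases, where $p_\mu$ is already real).

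First I would do $G=\mathrm{U}(N)$, which is the cleanest case and the template for the rest. Expanding $p_\mu=\sum_{\lambda\vdash|\mu|}\chi^\lambda(\mu)\,s_\lambda$ in Schur functions and using Weyl's theorem that the $s_\lambda$ with $\ell(\lambda)\le N$ are precisely the irreducible characters of $\mathrm{U}(N)$ and are $L^2$-orthonormal under Haar measure, one gets $\int_{\mathrm{U}(N)}p_\mu\,\overline{p_\nu}\,\mathrm{d}A=0$ unless $|\mu|=|\nu|$ (invariance under $A\mapsto e^{i\theta}A$), and otherwise $\sum_{\lambda:\ell(\lambda)\le N}\chi^\lambda(\mu)\,\overline{\chi^\lambda(\nu)}$. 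For $N\ge|\mu|$ all $\lambda\vdash|\mu|$ satisfy $\ell(\lambda)\le N$, so column orthogonality of symmetric-group characters gives $\int_{\mathrm{U}(N)}p_\mu\,\overline{p_\nu}\,\mathrm{d}A=\delta_{\mu\nu}\,z_\mu$ with $z_\mu=\prod_k k^{a_k}a_k!$. Since $K$ and the exponents are fixed, this holds for all large $N$ and already equals $\mathbb{E}\bigl[\prod_k Z_k^{a_k}\,\overline{Z_k}^{\,b_k}\bigr]$ for the independent complex Gaussians $Z_k=\sqrt{k/2}(X_k+iY_k)$; no limit is needed for $\mathrm{U}(N)$.

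Next I would treat $G=\mathrm{O}(N)$, $\mathrm{SO}(N)$, $\mathrm{USp}(2N)$. Here $\mathrm{Tr}(A^k)$ is real and not centred: for $N$ large, $\mathbb{E}\,\mathrm{Tr}(A^k)=\eta_k$ with $\eta_k=\mathds{1}_{\{k\text{ even}\}}$ (orthogonal) or $\eta_k=-\mathds{1}_{\{k\text{ even}\}}$ (symplectic), so the target becomes $\mathbb{E}\bigl[\prod_k(\sqrt{k}\,X_k+\eta_k)^{a_k}\bigr]$. I would run the same argument with Schur functions replaced by the irreducible characters of $\mathrm{O}(N)$ (resp.\ $\mathrm{Sp}(2N)$): Littlewood's branching and modification rules express $p_\mu$ through these characters, which are again $L^2$-orthonormal under Haar measure, and once $N$ is large compared with $\sum_k ka_k$ the modification rules do not fire, so $\int_G p_\mu\,\mathrm{d}A$ stabilises to a universal quantity. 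Now the governing combinatorics is that of the Brauer algebra (perfect matchings) rather than the symmetric group, and it yields exactly Wick's theorem for the centred traces together with the mean shifts $\eta_k$: in the limit the covariance of $\mathrm{Tr}(A^j)$ and $\mathrm{Tr}(A^k)$ tends to $k\,\delta_{jk}$ and all joint cumulants of order $\ge 3$ vanish. Matching these against the moments of $\prod_k(\sqrt{k}X_k+\eta_k)^{a_k}$ gives the result; $\mathrm{O}(N)$ and $\mathrm{SO}(N)$ are handled together, since they differ only on a coset that washes out as $N\to\infty$.

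The $\mathrm{U}(N)$ case and the general moment-method bookkeeping are routine. \textbf{The main obstacle is the orthogonal/symplectic case:} one must understand the characters of these groups — equivalently the zonal/Brauer-algebra combinatorics — well enough to show that for large $N$ the modification rules contribute nothing, that the only low-order structure surviving the limit is the mean $\eta_k$ and the variance $k$, and that all higher joint cumulants die; in particular, ruling out spurious correlations between $\mathrm{Tr}(A^j)$ and $\mathrm{Tr}(A^k)$ for $j\ne k$, and pinning down the exact constants $\eta_k$, is where the real work lies. One also needs the routine remark that Gaussians and polynomials therein are moment determinate, so that convergence of all mixed moments upgrades to convergence in distribution.
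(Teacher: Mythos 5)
Note first that the paper does not prove this theorem at all: it is quoted as a known result, with the proof attributed to Diaconis--Shahshahani (via explicit computation of high moments of the traces) and, alternatively, to Johansson's Toeplitz/Szeg\H{o} argument, the latter being cited specifically to cover $\mathrm{SO}(N)$. Your plan is the Diaconis--Shahshahani moment route, and the $\mathrm{U}(N)$ half of it is correct and essentially complete: $\int_{\mathrm{U}(N)}p_\mu\overline{p_\nu}\,\mathrm{d}A=\delta_{\mu\nu}z_\mu$ for $N\ge|\mu|=|\nu|$ by Schur expansion, character orthogonality of $\mathrm{U}(N)$ and column orthogonality for $S_n$, and these exact moments coincide with the complex Gaussian moments, so moment determinacy finishes that case.

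The gap is exactly where you place it, but flagging it does not fill it: for $\mathrm{O}(N)$, $\mathrm{SO}(N)$ and $\mathrm{USp}(2N)$ the statements that the modification rules ``do not fire'' for $N$ large, that the stabilized integrals $\int_G p_\mu\,\mathrm{d}A$ are given by Wick's theorem with variance $k\,\delta_{jk}$ plus the mean shifts $\eta_k=\pm\mathds{1}_{\{k\text{ even}\}}$, and that all higher joint cumulants vanish, are precisely the content of the theorem in these cases; asserting that the Brauer-algebra combinatorics ``yields exactly'' this is a restatement of the goal, not an argument, and without carrying out the character/matching computation (as Diaconis--Shahshahani do using Ram's formulas, or Johansson via Szeg\H{o}) the proof is not there. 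A second concrete defect is the claim that $\mathrm{O}(N)$ and $\mathrm{SO}(N)$ ``are handled together, since they differ only on a coset that washes out'': Haar measure on $\mathrm{O}(N)$ is the average of the two cosets, so convergence of $\mathrm{O}(N)$ moments only controls that average and does not by itself yield the limit law on $\mathrm{SO}(N)$ (nor does the map $A\mapsto -A$ reduce one coset to the other in a way that lets you split the average without further input). This is why the paper cites Johansson's $\mathrm{SO}(N)$ ingredients separately; your proposal needs either an analogous separate computation for $\mathrm{SO}(N)$ or a genuine decoupling argument for the two cosets.
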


Theorem \ref{diaconistheorem} for $G=\mathrm{U}(N)$, $\mathrm{O}(N)$ and $\mathrm{USp}(2N)$ was first proved by Diaconis and Shahshahani \cite{DI3} by explicitly computing high moments of the traces. Johansson \cite[Corollary 3.5]{JO} gave a new proof based on the Weyl integral formula and a theorem of Szeg\H{o} on Toeplitz determinants. The main ingredients of the latter approach \cite[Proposition 3.1 and Theorem 3.2]{JO} were worked out also for the special orthogonal group, and immediately yield Theorem \ref{diaconistheorem} for $G=\mathrm{SO}(N)$.

\begin{proof}[Proof of Theorem \ref{limitlawtheorem}] Throughout, $X_k, Y_k$, $k \ge 1$ are i.i.d.\ standard Gaussian random variables. Given two real-valued random variables $X$ and $Y$, let
\[ d_L (X,Y) = \inf \left\{ \varepsilon >0 \, : \, \Pr (X \le x- \varepsilon) - \varepsilon \le \Pr (Y \le x) \le \Pr (X \le x+\varepsilon) + \varepsilon \textrm{ for all } x \in \mathbb{R} \right\} \]
denote the L\'evy metric, and recall that it metrizes convergence in distribution.

First, let $G=\mathrm{U}(N)$. Since $\mathrm{Tr} (A^k) = \sum_{n=1}^N e^{i k \theta_n}$, the exact formula \eqref{wassersteinUN} after centering reads
\[ N^2 W_2^2 (\mu_A, \lambda_{\mathbb{T}}) - \mathbb{E} \, N^2 W_2^2 (\mu_A, \lambda_{\mathbb{T}}) = 2 \sum_{k=1}^{\infty} \frac{|\mathrm{Tr} (A^k)|^2 - \mathbb{E} \, |\mathrm{Tr} (A^k)|^2}{k^2} . \]
Fix a large positive integer $K$. Simply replacing $\sum_{k=1}^{\infty}$ by $\sum_{k=K+1}^{\infty}$ throughout the proof of Theorem \ref{exacttheorem} leads to
\[ \mathrm{Var} \Bigg( 2 \sum_{k=K+1}^{\infty} \frac{|\mathrm{Tr} (A^k)|^2}{k^2} \Bigg) = 4 \sum_{k=K+1}^{\infty} \frac{T(k)}{k^4} + 4 \sum_{\substack{k,\ell =K+1 \\ k \neq \ell}}^{\infty} \frac{V(k,\ell) + \delta (k,\ell)}{k^2 \ell^2} \ll \frac{1}{K} . \]
An application of the Chebyshev inequality thus shows that
\[ \Pr \left( \left| \sum_{k=K+1}^{\infty} \frac{|\mathrm{Tr} (A^k)|^2 - \mathbb{E} \, |\mathrm{Tr} (A^k)|^2}{k^2} \right| \ge \frac{1}{K^{1/3}} \right) \ll \frac{1}{K^{1/3}} . \]
Since $\mathrm{Var} \, (\sum_{k=K+1}^{\infty} (X_k^2 + Y_k^2 -2) /k) = \sum_{k=K+1}^{\infty} (\mathrm{Var} (X_k^2 + Y_k^2))/k^2 \ll 1/K$, the Chebyshev inequality similarly shows that
\[ \Pr \left( \left| \sum_{k=K+1}^{\infty} \frac{X_k^2+Y_k^2 -2}{k} \right| \ge \frac{1}{K^{1/3}} \right) \ll \frac{1}{K^{1/3}} . \]
The previous two formulas immediately yield
\begin{equation}\label{dLupperbound}
\begin{split} d_L \bigg( N^2 W_2^2 (\mu_A, \lambda_{\mathbb{T}}) - &\mathbb{E} \, N^2 W_2^2 (\mu_A, \lambda_{\mathbb{T}}) , \sum_{k=1}^{\infty} \frac{X_k^2 + Y_k^2 -2}{k} \bigg) \\ &\le d_L \bigg( 2 \sum_{k=1}^K \frac{|\mathrm{Tr} (A^k)|^2 - \mathbb{E} \, |\mathrm{Tr} (A^k)|^2}{k^2} , \sum_{k=1}^K \frac{X_k^2 + Y_k^2 -2}{k} \bigg) + O \bigg( \frac{1}{K^{1/3}} \bigg) . \end{split}
\end{equation}
An application of Theorem \ref{diaconistheorem} gives
\[ 2 \sum_{k=1}^K \frac{|\mathrm{Tr} (A^k)|^2}{k^2} \overset{d}{\to} \sum_{k=1}^K \frac{X_k^2+Y_k^2}{k} \qquad \textrm{as } N \to \infty . \]
As observed in the proof of Theorem \ref{maintheorem}, $\mathbb{E} |\sum_{n=1}^N e^{i k \theta_n}|^2 = \min \{ k,N \}$. Hence for all $N \ge K$,
\[ 2 \sum_{k=1}^K \frac{\mathbb{E} \, |\mathrm{Tr} (A^k)|^2}{k^2} = \sum_{k=1}^K \frac{2}{k} . \]
Taking the limit as $N \to \infty$ in \eqref{dLupperbound} thus leads to
\[ \limsup_{N \to \infty} d_L \left( N^2 W_2^2 (\mu_A, \lambda_{\mathbb{T}}) - \mathbb{E} \, N^2 W_2^2 (\mu_A, \lambda_{\mathbb{T}}) , \sum_{k=1}^{\infty} \frac{X_k^2 + Y_k^2 -2}{k} \right) = O \left( \frac{1}{K^{1/3}} \right) . \]
Finally, taking the limit as $K \to \infty$ yields
\[ N^2 W_2^2 (\mu_A, \lambda_{\mathbb{T}}) - \mathbb{E} \, N^2 W_2^2 (\mu_A, \lambda_{\mathbb{T}}) \overset{d}{\to} \sum_{k=1}^{\infty} \frac{X_k^2 + Y_k^2 -2}{k} . \]
By Theorem \ref{maintheorem}, the centering term $\mathbb{E} \, N^2 W_2^2 (\mu_A, \lambda_{\mathbb{T}})$ can be replaced by $2 \log N + c_{\mathrm{U}(N)}$. This finishes the proof of Theorem \ref{limitlawtheorem} for $G=\mathrm{U}(N)$. Lemma \ref{reductionlemma} shows that Theorem \ref{limitlawtheorem} holds also for $G=\mathrm{SU}(N)$.

The proof for the other groups is entirely analogous, therefore we only point out the necessary modifications. Let $G=\mathrm{SO}(2N+1)$. Taking the trivial eigenvalue $1$ into account, we have $\mathrm{Tr} (A^k) = 1+2 \sum_{n=1}^N \cos (k \theta_n)$. The exact formula \eqref{wassersteinothergroups} after centering thus reads
\[ N_0^2 W_2^2 (\mu_A, \lambda_{\mathbb{T}}) - \mathbb{E} \, N_0^2 W_2^2 (\mu_A, \lambda_{\mathbb{T}}) = 2 \sum_{k=1}^{\infty} \frac{(\mathrm{Tr} (A^k) -1)^2 - \mathbb{E} \, (\mathrm{Tr} (A^k) -1)^2}{k^2} . \]
Theorem \ref{diaconistheorem} now gives
\[ 2 \sum_{k=1}^K \frac{(\mathrm{Tr} (A^k) -1)^2}{k^2} \overset{d}{\to} 2 \sum_{k=1}^K \frac{(\sqrt{k} X_k - \mathds{1}_{\{ k \textrm{ odd} \}})^2}{k^2} , \]
which eventually leads to
\[ N_0^2 W_2^2 (\mu_A, \lambda_{\mathbb{T}}) - \mathbb{E} \, N_0^2 W_2^2 (\mu_A, \lambda_{\mathbb{T}}) \overset{d}{\to} 2 \sum_{k=1}^{\infty} \frac{X_k^2 -  \mathds{1}_{\{ k \textrm{ odd}\}} (2/\sqrt{k}) X_k -1}{k} . \]
This proves Theorem \ref{limitlawtheorem} for $G=\mathrm{SO} (2N+1)$. The same holds for $G=\mathrm{O}(2N+1)$ by Lemma \ref{reductionlemma}.

In the case of $G=\mathrm{SO}(2N)$ and $\mathrm{USp}(2N)$, there are no trivial eigenvalues. We have $\mathrm{Tr} (A^k) = 2 \sum_{n=1}^N \cos (k \theta_n)$, and an application of Theorem \ref{diaconistheorem} leads to the desired limit laws. Finally, note that $W_2(\mu_A, \lambda_{\mathbb{T}})$ has the same distribution for $G=\mathrm{O}^- (2N+2)$ and $G=\mathrm{USp}(2N)$, as the nontrivial eigenvalues are determinantal point processes with the same kernel, cf.\ Table \ref{tabledeterminantal}.
\end{proof}

\section*{Appendix}

Let $X_k, Y_k$, $k \ge 1$ be i.i.d.\ standard Gaussians, and let
\[ \xi_G = \left\{ \begin{array}{ll} \displaystyle{\sum_{k=1}^{\infty} \frac{X_k^2 + Y_k^2 -2}{k}} & \textrm{if } G=\mathrm{U}(N) \textrm{ or } \mathrm{SU}(N), \\ \displaystyle{2 \sum_{k=1}^{\infty} \frac{X_k^2 -\mathds{1}_{\{ k \textrm{ odd} \}} (2/\sqrt{k}) X_k-1}{k}} & \textrm{if } G=\mathrm{SO}(2N+1) \textrm{ or } \mathrm{O}(2N+1), \\ \displaystyle{2 \sum_{k=1}^{\infty} \frac{X_k^2 -\mathds{1}_{\{ k \textrm{ even} \}} (2/\sqrt{k}) X_k-1}{k}} & \textrm{if } G=\mathrm{SO}(2N), \mathrm{O}^- (2N+2) \textrm{ or } \mathrm{USp} (2N) \end{array} \right. \]
be the limiting random variable in Theorem \ref{limitlawtheorem}, as defined in Table \ref{tablexiG}. In all three cases, we have a series of independent, centered random variables whose variances decay at the rate $\ll k^{-2}$. In particular, all three series converge a.s.\ and in $L^2$. Clearly, $\mathbb{E} \, \xi_G =0$, and by independence and convergence in $L^2$, the variances are
\[ \mathrm{Var} \Bigg( \sum_{k=1}^{\infty} \frac{X_k^2 + Y_k^2 -2}{k} \Bigg) = \sum_{k=1}^{\infty} \frac{\mathrm{Var} (X_k^2 + Y_k^2 -2)}{k^2} = \sum_{k=1}^{\infty} \frac{4}{k^2} = \frac{2 \pi^2}{3}, \]
and
\[ \begin{split} \mathrm{Var} \Bigg( 2 \sum_{k=1}^{\infty} &\frac{X_k^2 -\mathds{1}_{\{ k \textrm{ odd} \}} (2/\sqrt{k}) X_k-1}{k} \Bigg) \\ &= 4 \sum_{k=1}^{\infty} \frac{\mathrm{Var} (X_k^2 -\mathds{1}_{\{ k \textrm{ odd} \}} (2/\sqrt{k}) X_k-1)}{k^2} = 4 \sum_{k=1}^{\infty} \frac{2+ \mathds{1}_{\{ k \textrm{ odd} \}}(4/k)}{k^2} = \frac{4 \pi^2}{3} + 14 \zeta (3), \\ \mathrm{Var} \Bigg( 2 \sum_{k=1}^{\infty} &\frac{X_k^2 -\mathds{1}_{\{ k \textrm{ even} \}} (2/\sqrt{k}) X_k-1}{k} \Bigg)  \\ &= 4 \sum_{k=1}^{\infty} \frac{\mathrm{Var} (X_k^2 -\mathds{1}_{\{ k \textrm{ even} \}} (2/\sqrt{k}) X_k-1)}{k^2} =4 \sum_{k=1}^{\infty} \frac{2+ \mathds{1}_{\{ k \textrm{ even} \}}(4/k)}{k^2} = \frac{4 \pi^2}{3} + 2 \zeta (3) . \end{split} \]
This proves that $\mathrm{Var} \, \xi_G = \sigma_G$ for all $G$.

We now find the characteristic function of $\xi_G$. Consider first $G=\mathrm{U}(N)$ and $\mathrm{SU}(N)$. Recall that $X_k^2 + Y_k^2$ has chi-squared distribution with $2$ degrees of freedom, and its characteristic function is $(1-2it)^{-1}$. By independence, the characteristic function of $\xi_G$ is
\[ \mathbb{E} \, e^{it \xi_G} = \prod_{k=1}^{\infty} \mathbb{E} \, e^{it(X_k^2 + Y_k^2 -2)/k} = \prod_{k=1}^{\infty} \left( 1 - \frac{2it}{k} \right)^{-1} e^{-2it/k} = \Gamma (1-2it) e^{-2\gamma it} . \]
The infinite product in the previous formula appears in the Weierstrass product form of the gamma function, $\Gamma (z+1) e^{\gamma z} = \prod_{k=1}^{\infty} (1+z/k)^{-1} e^{z/k}$, valid for $z \in \mathbb{C} \backslash \{ -1, -2, \ldots \}$ \cite[p.\ 3]{AAR}.

Next, consider $G=\mathrm{SO} (2N+1)$ and $\mathrm{O}(2N+1)$. Recall that for any constant $c \in \mathbb{R}$, $(X_k+c)^2$ has noncentral chi-squared distribution with $1$ degree of freedom, and its characteristic function is $(1-2it)^{-1/2} e^{ic^2 t / (1-2it)}$. Therefore
\[ \begin{split} \mathbb{E} \, \exp \left( it 2\frac{X_k^2-1}{k} \right) &= \left( 1-\frac{4it}{k} \right)^{-1/2} e^{- 2it/k}, \\ \mathbb{E} \, \exp \left( it 2 \frac{ (X_k-1/\sqrt{k})^2 - 1/k-1}{k} \right) &= \left( 1-\frac{4it}{k} \right)^{-1/2} e^{- 2it/k} e^{2it/(k^2-4itk) - 2it/k^2} . \end{split} \]
By independence, the characteristic function of $\xi_G$ is
\[ \begin{split} \mathbb{E} \, e^{it \xi_G} &= \prod_{k=1}^{\infty} \left( 1 -\frac{4it}{k} \right)^{-1/2} e^{-2it/k} \prod_{\substack{k=1 \\ k \textrm{ odd}}}^{\infty} e^{2it/(k^2-4itk) - 2it/k^2} \\ &= \Gamma (1-4it)^{1/2} e^{-2\gamma it} \exp \left( \sum_{\substack{k=1 \\ k \textrm{ odd}}}^{\infty} \frac{2it}{k^2 -4itk} - \sum_{\substack{k=1 \\ k \textrm{ odd}}}^{\infty} \frac{2it}{k^2} \right) . \end{split} \]
The series representation of the digamma function $\psi (z+1)+\gamma = \sum_{k=1}^{\infty} z/(k^2+kz)$, valid for $z \in \mathbb{C} \backslash \{ -1, -2, \ldots \}$ \cite[p.\ 13]{AAR}, shows that
\[ \sum_{k=1}^{\infty} \frac{2it}{k^2-4itk} = - \frac{\psi (1-4it)+\gamma}{2} \qquad \textrm{and} \qquad \sum_{\substack{k=1 \\ k \textrm{ even}}}^{\infty} \frac{2it}{k^2-4itk} = - \frac{\psi (1-2it) + \gamma}{4} . \]
Therefore
\[ \mathbb{E} \, e^{it \xi_G} = \Gamma (1-4it)^{1/2} \exp \left( - \left( 2\gamma + \frac{\pi^2}{4} \right) it - \frac{2 \psi (1-4it) - \psi (1-2it)+\gamma}{4} \right) . \]

In the case of $G=\mathrm{SO}(2N)$, $\mathrm{O}^- (2N+2)$ and $\mathrm{USp}(2N)$, we similarly deduce that the characteristic function of $\xi_G$ is
\[ \begin{split} \mathbb{E} \, e^{it \xi_G} &= \Gamma (1-4it)^{1/2} e^{-2\gamma it} \exp \left( \sum_{\substack{k=1 \\ k \textrm{ even}}}^{\infty} \frac{2it}{k^2 -4itk} - \sum_{\substack{k=1 \\ k \textrm{ even}}}^{\infty} \frac{2it}{k^2} \right) \\ &= \Gamma (1-4it)^{1/2} \exp \left( - \left( 2\gamma + \frac{\pi^2}{12} \right) it - \frac{\psi (1-2it)+\gamma}{4} \right) . \end{split} \]
This concludes the computation of the characteristic function of $\xi_G$ for all $G$, as given in Table \ref{tablexiGchar}.

Finally, observe that the substitution $y=e^{-x/2}$ in the integral representation of the gamma function yields
\[ \Gamma (1-2it) = \int_0^{\infty} y^{-2it} e^{-y} \, \mathrm{d} y = \int_{-\infty}^{\infty} e^{itx} \frac{1}{2} e^{-x/2-e^{-x/2}} \, \mathrm{d}x . \]
In particular, $\Gamma (1-2it) e^{-2\gamma it}$ is the Fourier transform of the function
\[ f(x) = \frac{1}{2} \exp \left( -\frac{x+2 \gamma}{2} - \exp \left( - \frac{x+2 \gamma}{2} \right) \right) , \qquad x \in \mathbb{R} . \]
This proves that the density function of $\xi_{\mathrm{U}(N)} = \xi_{\mathrm{SU}(N)}$ is $f(x)$.

\section*{Acknowledgments} The author is supported by the Austrian Science Fund (FWF) project M 3260-N.

\end{document}